\title{4-dimensional riemannian manifolds with a harmonic 2-form of constant length}
\begin{document}
\author{Inyoung Kim}
\maketitle
  \begin{abstract}
  It was shown by Seaman that if a compact, oriented 4-dimensional riemannian manifold $(M, g)$ of positive
 sectional curvature admits a harmonic 2-form of constant length, its intersection form is definite and such a harmonic form is unique up to constant multiples. 
 In this paper, we show that such a manifold is diffeomorphic to 
 $\mathbb{CP}_{2}$ with a slightly weaker curvature hypothesis and there is an infinite dimensional moduli space of such metrics near 
 the Fubini-Study metric on  $\mathbb{CP}_{2}$.
 We discuss some of conditions  which can be added in order to get the Fubini-Study metric
 up to diffeomorphisms and rescaling.

\end{abstract}
\maketitle
\section{\large\textbf{Introduction}}\label{S:Intro}
Seaman showed that if a compact, oriented 4-dimensional manifold of positive sectional curvature
admits a harmonic 2-form of constant length, then this harmonic 2-form is either self-dual or anti-self-dual [17]. 
This result was improved by Noronha [15] and Costa and Ribeiro [4] by using weaker curvature hypothesis. 
To describe these results, let $P\subset T_{p}M$ be a plane at a tangent space of $p\in M$.
Then we define biorthogonal sectional curvature by 
\[K^{\perp}(P)=\frac{K(P)+K(P^{\perp})}{2}.\]
Seaman's theorem was improved in [15] by assuming $K^{\perp}>0$ rather than positive sectional curvature
and in [4] by assuming $K^{\perp}<\frac{s}{4}$ and positive scalar curvature condition.

In this paper, we relate these results with an almost-K\"ahler structure. 
For this, we explain an oriented, compact 4-dimensional riemannian manifold $(M, g)$ with a self-dual harmonic 2-form 
with length $\sqrt{2}$ defines an almost-K\"ahler structure. 
Then by using Liu's theorem [13], we show that if a compact, 4-dimensional manifold admits a harmonic 2-form of constant length
and has $K^{\perp}>0$, then $M$ is diffeomorphic to $\mathbb{CP}_{2}$.
This kind of argument was already used in B\"ar [2] in somewhat different case. 
Since we have not found a paper which explicitly mention theorems in the form of this paper, 
we provide some detail and discuss the existence of metrics which has a harmonic 2-form of constant length with positive sectional curvature on  $\mathbb{CP}_{2}$. 

\vspace{20pt}
$\mathbf{Acknowledgement}$: The author is very thankful to Prof. Claude LeBrun for suggesting the problem and helpful discussions. 
\vspace{ 50pt} 

\section{\large\textbf{Seaman's result and its improvements}}\label{S:Intro}
\newtheorem{Theorem}{Theorem}
First, we state results in [17], [15], [4]. In this paper, we only consider smooth, connected 4-dimensional manifolds. 
\begin{Theorem}
(Seaman [17]) Let $(M^{4}, g)$ be a compact, oriented riemannian manifold of positive sectional curvature. 
Then up to constant multiples, $M$ has at most one harmonic 2-form of constant length. 
If $M$ admits a harmonic 2-form of constant length, then the intersection form of $M$ is definite. 
\end{Theorem}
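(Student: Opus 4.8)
The plan is to run the Bochner technique on the given harmonic $2$-form $\omega$, using crucially that $|\omega|$ is constant. Normalize $|\omega|\equiv c>0$. On a compact oriented $4$-manifold the Hodge star preserves harmonic forms, so the self-dual and anti-self-dual parts $\omega^{\pm}=\tfrac12(\omega\pm\ast\omega)$ are again harmonic, and I write $f_{\pm}=|\omega^{\pm}|^{2}$, so that $f_{+}+f_{-}=c^{2}$. The Weitzenb\"ock formula gives $0=\Delta\omega=\nabla^{\ast}\nabla\omega+\mathcal{W}(\omega)$, where $\mathcal{W}$ is the curvature endomorphism of $\Lambda^{2}$; pairing with $\omega$ and using $\langle\nabla^{\ast}\nabla\omega,\omega\rangle=\tfrac12\Delta|\omega|^{2}+|\nabla\omega|^{2}$ together with $\Delta|\omega|^{2}=0$ produces the pointwise identity
\[|\nabla\omega|^{2}+\langle\mathcal{W}(\omega),\omega\rangle=0.\]

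Next I would make the curvature term explicit. Diagonalizing $\omega=\lambda\,e^{1}\wedge e^{2}+\mu\,e^{3}\wedge e^{4}$ in an oriented orthonormal coframe and using $\mathcal{W}|_{\Lambda^{\pm}}=\tfrac{s}{3}-2W^{\pm}$ together with the standard relation between the diagonal entries of $W^{\pm}$ and sectional curvatures, a direct computation gives
\[\langle\mathcal{W}(\omega),\omega\rangle=\bigl(K_{13}+K_{14}+K_{23}+K_{24}\bigr)(\lambda^{2}+\mu^{2})-4\lambda\mu\,R_{1234}.\]
As a check, on the Fubini--Study metric of $\mathbb{CP}_{2}$ one has $\lambda=\mu$, the four mixed sectional curvatures equal $1$ and $R_{1234}=2$, so the term vanishes and the Kähler form is parallel. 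The same relation yields the clean pointwise inequality I will exploit repeatedly: writing $w^{\pm}$ for the top eigenvalue of $W^{\pm}$, the surjection $SO(4)\to SO(3)\times SO(3)$ lets me pick a frame whose induced bases diagonalize $W^{+}$ and $W^{-}$, realizing $K_{12}+K_{34}=w^{+}+w^{-}+\tfrac{s}{6}$; since the remaining four sectional curvatures are positive while all six sum to $\tfrac{s}{2}$, positive sectional curvature forces $w^{+}+w^{-}<\tfrac{s}{3}$.

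The heart of the argument, and the step I expect to be the main obstacle, is to upgrade the Bochner identity to the statement that $\omega$ is everywhere self-dual or everywhere anti-self-dual, i.e. $f_{+}\equiv0$ or $f_{-}\equiv0$. The two Weitzenb\"ock equations read $\tfrac12\Delta f_{\pm}=-|\nabla\omega^{\pm}|^{2}-\bigl(\tfrac{s}{3}-2(W^{\pm})_{\hat\omega^{\pm}}\bigr)f_{\pm}$, where $(W^{\pm})_{\hat\omega^{\pm}}$ denotes the value of $W^{\pm}$ in the direction of $\omega^{\pm}$. Applying the maximum principle to $f_{-}$, at an interior maximum with $f_{-}>0$ one gets $(W^{-})_{\hat\omega^{-}}\ge\tfrac{s}{6}$, whence $w^{-}\ge\tfrac{s}{6}$ and, by the displayed inequality, $w^{+}<\tfrac{s}{6}$ there; the delicate point is to propagate this sign information so as to force $f_{-}\equiv0$, which is exactly where positive sectional curvature must be used in full and where I expect the real work to lie. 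Once this is done, continuity and connectedness of $M$ give that $\omega$ has a single type globally.

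Granting that $\omega$ is, say, self-dual, the identity becomes $|\nabla\omega|^{2}=\bigl(2(W^{+})_{\hat\omega}-\tfrac{s}{3}\bigr)|\omega|^{2}\ge0$, so $w^{+}\ge(W^{+})_{\hat\omega}\ge\tfrac{s}{6}$ at every point; the inequality $w^{+}+w^{-}<\tfrac{s}{3}$ then yields $w^{-}<\tfrac{s}{6}$ everywhere, so $\mathcal{W}|_{\Lambda^{-}}=\tfrac{s}{3}-2W^{-}>0$ and every anti-self-dual harmonic form vanishes by the Bochner vanishing theorem. Hence $b^{-}=0$ (or $b^{+}=0$ in the anti-self-dual case) and the intersection form is definite. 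For uniqueness, two harmonic forms of constant length are each self-dual or anti-self-dual; they cannot be of opposite types, since then they would be pointwise orthogonal and their sum would again be a harmonic form of constant length that is neither self-dual nor anti-self-dual. If $\omega_{1},\omega_{2}$ are both self-dual but not proportional, then at a point where their directions are independent the bound $(W^{+})_{\hat\omega_{i}}\ge\tfrac{s}{6}$ forces the top two eigenvalues of $W^{+}$ to sum to at least $\tfrac{s}{3}$, so its smallest eigenvalue is at most $-\tfrac{s}{3}$; testing $K_{12}+K_{34}$ with that eigenvector against the top eigenvector of $W^{-}$ gives $K_{12}+K_{34}\le w^{-}-\tfrac{s}{6}$, while positivity gives $K_{12}+K_{34}>0$, so $w^{-}>\tfrac{s}{6}$, contradicting $w^{+}+w^{-}<\tfrac{s}{3}$ with $w^{+}\ge\tfrac{s}{6}$. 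Thus the two forms are everywhere proportional, and by constant length they differ by a constant multiple.
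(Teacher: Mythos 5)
Your setup is sound and in fact already contains every ingredient the paper uses: the pointwise Bochner identity for a constant-length harmonic form, and the inequality $w^{+}+w^{-}<\tfrac{s}{3}$ extracted from positive sectional curvature via the decomposability trick (this is exactly the paper's Lemma 1 plus Corollary 1, with $w^{\pm}=\lambda_{3}^{\pm}$, the inequality being equivalent to $K_{3}^{\perp}<\tfrac{s}{4}$). But the step you yourself flag as ``where the real work must lie'' is a genuine gap, and your maximum-principle strategy cannot close it: at an interior maximum of $f_{-}$ you learn $w^{-}\geq\tfrac{s}{6}$ at that single point, and nothing in your argument propagates this information. The repair is to drop the maximum principle entirely and use your own first display as what it is: a \emph{pointwise} identity valid at every point of $M$, since constancy of $|\omega|^{2}$ kills $\Delta|\omega|^{2}$ everywhere, not just at extrema. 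Thus $\langle(\tfrac{s}{3}-2W)\omega,\omega\rangle=-|\nabla\omega|^{2}\leq 0$ with $\omega\neq 0$ at every point, i.e.\ the block-diagonal operator $\tfrac{s}{3}-2W$ fails to be positive definite everywhere; while your inequality $w^{+}+w^{-}<\tfrac{s}{3}$ says that at each point at most one of the two blocks $\tfrac{s}{3}-2W^{\pm}$ can fail. Hence the sets $U^{\pm}=\{\,x\in M:\tfrac{s}{3}-2W^{\pm}>0 \text{ at } x\,\}$ are disjoint open sets covering $M$, and connectedness forces one of them, say $U^{-}$, to be all of $M$. The integrated Bochner argument then annihilates \emph{every} anti-self-dual harmonic $2$-form at once, which yields simultaneously $\omega^{-}\equiv 0$ (the purity you were stuck on) and $b^{-}=0$ (definiteness). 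This is precisely the paper's Lemma 3 (proved there by a more elaborate distance-minimizing argument, but with identical content) followed by Proposition 1; note the logical order is the reverse of yours, which tried to get purity first and global positivity of a block afterwards.

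Your uniqueness argument also contains a flawed linear-algebra step. From $(W^{+})_{\hat\omega_{1}}\geq\tfrac{s}{6}$ and $(W^{+})_{\hat\omega_{2}}\geq\tfrac{s}{6}$ at a point where $\hat\omega_{1},\hat\omega_{2}$ are merely linearly independent, you cannot conclude that the two largest eigenvalues of $W^{+}$ sum to at least $\tfrac{s}{3}$: that inference requires the two directions to be orthogonal. If $\hat\omega_{2}$ is close to $\hat\omega_{1}$, both Rayleigh quotients can be close to the top eigenvalue while the second eigenvalue is arbitrarily negative, so no contradiction arises and the chain collapses. It is worth noting that the paper never proves uniqueness by such a pointwise count; uniqueness is subsumed in its stronger conclusion, since Liu's theorem upgrades the resulting almost-K\"ahler structure with $s>0$ to a diffeomorphism $M\cong\mathbb{CP}_{2}$, whence $b_{2}=1$ and any harmonic $2$-form is unique up to constant multiples for purely cohomological reasons. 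A direct proof of the uniqueness clause requires Seaman's finer analysis in [17], not the eigenvalue estimate you propose.
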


\begin{Theorem}
(Noronha [15])
Let $(M^{4}, g)$ be a compact, oriented riemannian manifold and suppose $K^{\perp}>0$. 
If $M$ admits a harmonic 2-form of constant length, then the intersection form of $M$ is definite. 
\end{Theorem}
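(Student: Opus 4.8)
The plan is to combine Hodge theory with the Weitzenb\"ock formula for $2$-forms, feeding in $K^{\perp}>0$ through a pointwise identity for the eigenvalues of the Weyl curvature. By Hodge theory the space of harmonic $2$-forms splits as $\mathcal{H}^{2}=\mathcal{H}^{+}\oplus\mathcal{H}^{-}$ under $*$, with $\dim\mathcal{H}^{\pm}=b^{\pm}$, and the intersection form is definite exactly when $b^{+}b^{-}=0$. So it suffices to produce a nowhere-zero harmonic form of pure type and then rule out harmonic forms of the opposite type. First I would show, as in Seaman and Noronha, that the given constant-length harmonic form $\omega$ is everywhere self-dual or everywhere anti-self-dual; possibly after reversing orientation we may then assume $\omega=\omega^{+}$ is self-dual and, having constant length, nowhere zero. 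This purity step is one of the two places where the hypotheses genuinely enter, and I expect it to be the more delicate one under the weaker assumption $K^{\perp}>0$ (rather than positive sectional curvature): one applies the Bochner identity to $|\omega^{+}|^{2}$ and $|\omega^{-}|^{2}$ separately, uses that their sum is constant, and argues that the two summands cannot be simultaneously nonzero.

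Next I would record the Weitzenb\"ock formula in dimension four: for a harmonic self-dual form $\alpha$ one has $\nabla^{*}\nabla\alpha=(2W^{+}-\tfrac{s}{3})\alpha$, and likewise with $W^{-}$ for anti-self-dual forms. Applying the Bochner identity to the constant function $|\omega^{+}|^{2}$ gives, pointwise, $2\langle W^{+}\omega^{+},\omega^{+}\rangle-\tfrac{s}{3}|\omega^{+}|^{2}=|\nabla\omega^{+}|^{2}\ge 0$, so the largest eigenvalue of $W^{+}$ satisfies $w_{3}^{+}\ge \tfrac{s}{6}$ everywhere. This is the one-sided curvature information supplied by the constant-length form.

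The key lemma, and the heart of the argument, is a pointwise identity expressing biorthogonal curvature through the Weyl eigenvalues:
\[
\min_{P}K^{\perp}(P)=\frac{s}{12}+\frac{1}{2}\left(w_{1}^{+}+w_{1}^{-}\right),
\]
where $w_{1}^{\pm}\le w_{2}^{\pm}\le w_{3}^{\pm}$ are the eigenvalues of $W^{\pm}$. To prove it I would fix a point, choose an oriented orthonormal coframe $e^{1},\dots,e^{4}$ (writing $e^{ij}=e^{i}\wedge e^{j}$), and use the elementary identity $2K^{\perp}(\mathrm{span}(e_{1},e_{2}))=\langle\mathcal{R}\alpha,\alpha\rangle+\langle\mathcal{R}\beta,\beta\rangle$ for the unit forms $\alpha=\tfrac{1}{\sqrt2}(e^{12}+e^{34})$ and $\beta=\tfrac{1}{\sqrt2}(e^{12}-e^{34})$, together with $\mathcal{R}|_{\Lambda^{\pm}}=W^{\pm}+\tfrac{s}{12}$. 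The essential point is that under the two-to-one map $SO(4)\to SO(3)\times SO(3)$ the pair $(\alpha,\beta)$ ranges \emph{independently} over the unit self-dual and anti-self-dual forms as $P$ ranges over all $2$-planes, so minimizing over planes decouples into minimizing $\langle W^{+}\alpha,\alpha\rangle$ and $\langle W^{-}\beta,\beta\rangle$ separately. Consequently $K^{\perp}>0$ yields $w_{1}^{+}+w_{1}^{-}>-\tfrac{s}{6}$ at every point.

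Finally I would combine these inputs by a short eigenvalue chase using $\mathrm{tr}\,W^{\pm}=0$. From $w_{3}^{+}\ge \tfrac{s}{6}$ and $w_{1}^{+}\le w_{2}^{+}$ one gets $w_{1}^{+}\le-\tfrac{s}{12}$; feeding this into $w_{1}^{+}+w_{1}^{-}>-\tfrac{s}{6}$ gives $w_{1}^{-}>-\tfrac{s}{12}$; and then $w_{3}^{-}=-(w_{1}^{-}+w_{2}^{-})\le-2w_{1}^{-}<\tfrac{s}{6}$. Hence $w_{3}^{-}<\tfrac{s}{6}$ everywhere, so $2W^{-}-\tfrac{s}{3}$ is negative definite on $\Lambda^{-}$. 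For any anti-self-dual harmonic $\psi$ the Bochner formula then gives $\int_{M}|\nabla\psi|^{2}=\int_{M}\big(2\langle W^{-}\psi,\psi\rangle-\tfrac{s}{3}|\psi|^{2}\big)< 0$ unless $\psi\equiv 0$, forcing $\psi\equiv 0$. Therefore $b^{-}=0$ and the intersection form is (positive) definite. The Bochner manipulations are routine; the real obstacles are the purity step and the independence claim in the key lemma, which is exactly what converts the one-sided bound $w_{3}^{+}\ge s/6$ into the bound $w_{3}^{-}<s/6$ that annihilates the opposite type.
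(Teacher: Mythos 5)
Your steps downstream of the ``purity step'' are correct and essentially reproduce the paper's argument: your key lemma is the paper's Lemma 1, your eigenvalue chase parallels the paper's Corollary 1 and Lemma 2, and your final integrated Bochner argument is the paper's Proposition 1. The genuine gap is the purity step itself. You take as input, ``as in Seaman and Noronha,'' that $\omega$ is everywhere self-dual or anti-self-dual, and your only quantitative use of the constant-length hypothesis --- the bound $w_{3}^{+}\geq s/6$ --- comes from applying Bochner to the \emph{constant} function $|\omega^{+}|^{2}$. But $|\omega^{+}|$ is not known to be constant until purity is established: the hypothesis only gives $|\omega^{+}|^{2}+|\omega^{-}|^{2}=\mathrm{const}$. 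Moreover, purity is not an easier preliminary fact that can be quoted and then built upon; in Seaman, in Noronha, and in this paper alike, purity is a \emph{conclusion} of precisely the kind of argument you are trying to run: one first shows that (say) $\tfrac{s}{3}-2W^{-}$ is positive on all of $M$, and then $\omega^{-}=0$ and $b^{-}=0$ drop out together from the integrated Weitzenb\"ock formula. Your sketch for purity --- Bochner on $|\omega^{\pm}|^{2}$ separately plus constancy of the sum --- does not obviously close up, since nothing a priori prevents both parts from being nonzero with oscillating norms. So as written the proposal hides the hard content of the theorem in a deferred step.

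The gap is repairable with your own tools, but the logical order must be inverted. Apply Bochner to the full form $\omega$, whose length genuinely is constant: pointwise $2W(\omega,\omega)-\tfrac{s}{3}|\omega|^{2}=|\nabla\omega|^{2}\geq 0$, which yields $\max(w_{3}^{+},w_{3}^{-})\geq s/6$ at every point, with no purity assumption. Your key lemma plus $K^{\perp}>0$ gives $w_{3}^{+}+w_{3}^{-}\leq -2(w_{1}^{+}+w_{1}^{-})<s/3$, so at each point \emph{exactly one} of $w_{3}^{\pm}\geq s/6$ holds. This exposes a globalization issue your proposal never faces because it assumed purity: which of the two alternatives holds could a priori change from point to point. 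It is settled by connectedness --- the sets $\{w_{3}^{+}<s/6\}$ and $\{w_{3}^{-}<s/6\}$ are disjoint open sets covering $M$, so one of them is all of $M$; this globalization is exactly the content of the paper's Lemma 3, proved there by a longer geodesic/intermediate-value argument. Once, say, $w_{3}^{-}<s/6$ holds globally, your integrated Bochner step kills all anti-self-dual harmonic forms, giving $b^{-}=0$, and the purity of $\omega$ then follows as a byproduct rather than serving as an input.
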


\begin{Theorem}
(Costa - Ribeiro [4])
Let $(M^{4}, g)$ be a compact, oriented riemannian manifold.
Suppose $K^{\perp}<\frac{s}{4}$, where $s$ is the scalar curvature. 
If $M$ admits a harmonic 2-form of constant length, then the intersection form of $M$ is definite. 
\end{Theorem}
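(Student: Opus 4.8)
The plan is to combine Hodge theory with the Bochner--Weitzenb\"ock technique, the whole argument being driven by the pointwise translation of the biorthogonal curvature hypothesis into a bound on the Weyl curvature. Write the given harmonic form as $\omega=\omega^{+}+\omega^{-}$, its self-dual and anti-self-dual parts. In the middle degree $*$ commutes with the Hodge Laplacian, so $\omega^{\pm}=\frac{1}{2}(\omega\pm*\omega)$ are again harmonic; by Hodge theory $b^{\pm}=\dim\mathcal H^{\pm}$, and the intersection form is definite exactly when $b^{+}b^{-}=0$. Thus the goal is to show that one of the two spaces of harmonic forms is trivial. I would reduce this to two sub-claims: (a) the given constant-length form is purely self-dual or purely anti-self-dual, and (b) once a \emph{constant-length} self-dual harmonic form exists, every anti-self-dual harmonic form vanishes (and symmetrically).

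The analytic engine is the Weitzenb\"ock formula on $\Lambda^{\pm}$, which in dimension four reads $\nabla^{*}\nabla\omega^{\pm}=2W^{\pm}(\omega^{\pm})-\frac{s}{3}\omega^{\pm}$ for a harmonic form, whence the Bochner identity (with $\Delta$ the nonnegative Laplacian)
\[\frac{1}{2}\Delta|\omega^{\pm}|^{2}=2\langle W^{\pm}(\omega^{\pm}),\omega^{\pm}\rangle-\frac{s}{3}|\omega^{\pm}|^{2}-|\nabla\omega^{\pm}|^{2}.\]
In parallel I would record the curvature translation: if $P$ is a plane with associated unit self-dual and anti-self-dual forms $\alpha,\beta$, a short computation using the diagonal blocks $\mathcal R|_{\Lambda^{\pm}}=W^{\pm}+\frac{s}{12}$ of the curvature operator gives
\[K^{\perp}(P)=\frac{s}{12}+\frac{1}{2}\big(\langle W^{+}\alpha,\alpha\rangle+\langle W^{-}\beta,\beta\rangle\big).\]
Since every pair $(\alpha,\beta)$ of unit self-dual and anti-self-dual forms arises from some plane, the hypothesis $K^{\perp}<\frac{s}{4}$ is equivalent to $\langle W^{+}\alpha,\alpha\rangle+\langle W^{-}\beta,\beta\rangle<\frac{s}{3}$ for all such $\alpha,\beta$, hence to the clean eigenvalue bound $\lambda^{+}_{\max}+\lambda^{-}_{\max}<\frac{s}{3}$, where $\lambda^{\pm}_{\max}$ is the largest eigenvalue of $W^{\pm}$.

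With these in hand, sub-claim (b) is immediate. If $\omega^{+}$ is self-dual, harmonic, and of constant length $c>0$, then $\Delta|\omega^{+}|^{2}=0$, so the Bochner identity forces $\langle W^{+}\omega^{+},\omega^{+}\rangle\ge\frac{s}{6}|\omega^{+}|^{2}$ pointwise, i.e. $\lambda^{+}_{\max}\ge\frac{s}{6}$ everywhere. The eigenvalue bound then yields $\lambda^{-}_{\max}<\frac{s}{6}$ pointwise, so for any anti-self-dual harmonic $\psi$ the integrated Bochner identity gives $\int_{M}|\nabla\psi|^{2}=\int_{M}\big(2\langle W^{-}\psi,\psi\rangle-\frac{s}{3}|\psi|^{2}\big)<0$ unless $\psi\equiv 0$; hence $b^{-}=0$ and the intersection form is positive definite. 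This also applies verbatim inside (a) in the case where $u=|\omega^{+}|^{2}$ is constant, since then both $\omega^{\pm}$ have constant length and each invocation of (b) contradicts the nonvanishing of the other part.

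The main obstacle is sub-claim (a), the self-dual/anti-self-dual dichotomy. Setting $u=|\omega^{+}|^{2}$ and $v=|\omega^{-}|^{2}$, the constant-length hypothesis gives $u+v\equiv c$ and $\Delta u=-\Delta v$, and adding the two Bochner identities yields $|\nabla\omega^{+}|^{2}+|\nabla\omega^{-}|^{2}=2\langle W^{+}\omega^{+},\omega^{+}\rangle+2\langle W^{-}\omega^{-},\omega^{-}\rangle-\frac{s}{3}c$. The difficulty is the genuinely mixed case where $u$ is non-constant: the summed bound $\lambda^{+}_{\max}+\lambda^{-}_{\max}<\frac{s}{3}$ does \emph{not} dominate the $u$-weighted combination $\lambda^{+}_{\max}u+\lambda^{-}_{\max}v$ that appears, so a purely pointwise contradiction is unavailable. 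I would attack this by the maximum principle: evaluating the Bochner identity for $u$ at an interior maximum (where $\Delta u\ge 0$, $\nabla u=0$) forces $\lambda^{+}_{\max}\ge\frac{s}{6}$ there, evaluating the identity for $v$ at the corresponding minimum gives the complementary information, and feeding these back through the coupled eigenvalue bound and the constraint $u+v\equiv c$ should force $uv\equiv 0$; since $\{u=0\}$ and $\{v=0\}$ are disjoint closed sets covering the connected $M$, one of $\omega^{\pm}$ vanishes identically. Making this extremal argument close is where I expect the real work to lie.
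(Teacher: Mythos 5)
Your reduction, your curvature translation, and your sub-claim (b) are all sound, and they coincide with the paper's own ingredients: the hypothesis $K^{\perp}<\frac{s}{4}$ is indeed equivalent to the pointwise bound $\lambda^{+}_{\max}+\lambda^{-}_{\max}<\frac{s}{3}$ (the paper's Lemma 1 proves the step you only assert, namely that every pair of unit self-dual/anti-self-dual forms comes from a plane, via decomposability of $\alpha+\beta$), and once one of $\frac{s}{3}-2W^{\pm}$ is known to be positive on all of $M$, the integrated Weitzenb\"ock formula kills all harmonic forms of that type (the paper's Proposition 1). The genuine gap is sub-claim (a) in the mixed case, and that is exactly where the heart of the theorem (the paper's Lemma 3) lies. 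Your extremal argument does not close. At an interior maximum of $u$ you correctly get $\lambda^{+}_{\max}\geq\frac{s}{6}$, hence $\lambda^{-}_{\max}<\frac{s}{6}$, at that one point; but at that same point the identity for $v$ reads $\frac{1}{2}\Delta v=2\langle W^{-}\omega^{-},\omega^{-}\rangle-\frac{s}{3}v-|\nabla\omega^{-}|^{2}$, and since at an interior minimum the nonnegative Laplacian automatically satisfies $\Delta v\leq 0$, the strict inequality $\frac{1}{2}\Delta v<-|\nabla\omega^{-}|^{2}\leq 0$ that follows from $\lambda^{-}_{\max}<\frac{s}{6}$ is perfectly consistent: there is no contradiction and no usable ``complementary information.'' Moreover, data extracted at a single extremum cannot by itself yield the global statement $uv\equiv 0$, and your final connectedness step is applied to the wrong sets, $\{u=0\}$ and $\{v=0\}$, whose covering property is precisely what remains unproven.

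The repair uses pieces you already wrote down, assembled differently. Because $u+v\equiv c>0$, the Laplacian terms cancel in the summed Bochner identity at \emph{every} point, not only at extrema, giving pointwise $2\langle W^{+}\omega^{+},\omega^{+}\rangle+2\langle W^{-}\omega^{-},\omega^{-}\rangle\geq\frac{s}{3}(u+v)$, i.e. $(\lambda^{+}_{\max}-\tfrac{s}{6})u+(\lambda^{-}_{\max}-\tfrac{s}{6})v\geq 0$. Combined with $(\lambda^{+}_{\max}-\tfrac{s}{6})+(\lambda^{-}_{\max}-\tfrac{s}{6})<0$ and $u+v>0$, this shows that at each point \emph{exactly one} of $\lambda^{\pm}_{\max}\geq\frac{s}{6}$ holds. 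Hence the curvature-defined sets $A^{\pm}=\{p\in M:\lambda^{\pm}_{\max}(p)\geq\frac{s(p)}{6}\}$ are closed, disjoint, and cover $M$; connectedness forces one of them to be empty, so one of $\frac{s}{3}-2W^{\pm}$ is a positive operator on all of $M$. Your sub-claim (b) machinery then annihilates every harmonic form of the corresponding type, which yields the purity of $\omega$ and the definiteness of the intersection form simultaneously. This is exactly the paper's route: its Lemma 3 establishes the same global positivity by a connectedness argument (phrased there with distance functions, minimizing geodesics and the intermediate value theorem instead of a disjoint closed cover), and its Proposition 1 finishes with the integrated Weitzenb\"ock formula.
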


\vspace{20pt}
We prove this result in a slightly different way and improve the conclusion. 
We mention our main theorems.

\begin{Theorem}
Let $(M^{4}, g)$ be a compact, riemannian manifold. Suppose $K^{\perp}>0$ or $K^{\perp}<\frac{s}{4}$ on $M$. 
If $(M, g)$ admits a harmonic 2-form of constant length, then $M$ is orientable and 
there is a choice of an orientation such that $(M, g)$ defines an almost-K\"ahler structure.
Moreover, $M$ is diffeomorphic to $\mathbb{CP}_{2}$.

\end{Theorem}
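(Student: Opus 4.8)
The plan is to proceed in four stages: first establish orientability, then promote the harmonic form to an almost-K\"ahler structure, then extract the differential-topological data (a positive definite intersection form together with a positive scalar curvature metric), and finally invoke Seiberg--Witten theory and Liu's theorem to pin down the diffeomorphism type.

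The first step is orientability, and here I would exploit the mechanism underlying Theorems 1--3 rather than their stated conclusion: under either curvature hypothesis a harmonic 2-form of constant length on a compact \emph{oriented} 4-manifold is forced to be self-dual or anti-self-dual. To handle a possibly non-orientable $M$, pass to the oriented double cover $\pi\colon \tilde M\to M$ with deck involution $\tau$. The pullback $\tilde\omega=\pi^{*}\omega$ is harmonic of constant length, and the curvature hypotheses, being pointwise and isometry-invariant, persist on $\tilde M$; hence $\tilde\omega$ is self-dual or anti-self-dual. But $\tau$ is an orientation-reversing isometry fixing $\tilde\omega$ (since $\pi\circ\tau=\pi$), whereas $\tau^{*}$ anti-commutes with the Hodge star on 2-forms, so $\tau^{*}(*\tilde\omega)=-*(\tau^{*}\tilde\omega)=-*\tilde\omega$. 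If $*\tilde\omega=\tilde\omega$ this forces $\tilde\omega=-\tilde\omega=0$, contradicting constant nonzero length. Thus $M$ is orientable. Choosing the orientation for which $\omega$ is self-dual and rescaling to $|\omega|\equiv\sqrt2$, a self-dual 2-form of pointwise length $\sqrt2$ is the fundamental form of a unique $g$-compatible almost-complex structure $J$ via $\omega(\cdot,\cdot)=g(J\cdot,\cdot)$; harmonicity gives $d\omega=0$, so $(g,J,\omega)$ is almost-K\"ahler. This settles the first assertion.

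Next I would collect the topology and curvature. Since $\omega$ is a nonzero self-dual harmonic form, $[\omega]^{2}=\int_{M}|\omega|^{2}>0$, so $b^{+}\ge1$; combined with the definiteness from Theorems 1--3 the intersection form is positive definite, in particular $b^{-}=0$. A short computation shows each curvature hypothesis forces $s>0$: summing $K^{\perp}$ over the three complementary pairs of an orthonormal frame equals $s/4$, so $K^{\perp}>0$ gives $s>0$, while $K^{\perp}<s/4$ gives $s/4<3s/4$, again $s>0$. Hence $M$ carries a metric of positive scalar curvature. Positive definiteness also excludes symplectic $(-1)$-spheres, so $(M,\omega)$ is minimal.

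Finally I would conclude via gauge theory. For $b^{+}\ge2$ the Seiberg--Witten invariants are diffeomorphism invariants that vanish in the presence of a positive scalar curvature metric, while Taubes' theorem gives $SW(\pm K)=\pm1$ for the symplectic canonical class; hence $b^{+}=1$, and with $b^{-}=0$ we obtain $b_{2}=1$. By Liu's theorem a symplectic 4-manifold with $b^{+}=1$ admitting a positive scalar curvature metric is rational or ruled, and among minimal rational or ruled surfaces only $\mathbb{CP}_{2}$ has positive definite intersection form; therefore $M$ is diffeomorphic to $\mathbb{CP}_{2}$. The hard part will be the gauge-theoretic input: correctly invoking the form of Liu's theorem that yields the rational/ruled dichotomy at the diffeomorphism level, and reconciling the positive-scalar-curvature vanishing of $SW$ with Taubes' nonvanishing. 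This is where the argument genuinely leaves Riemannian geometry and rests on deep symplectic results, so care is needed that the hypotheses ($b^{+}=1$, minimality, the sign of $c_{1}\cdot[\omega]$) match exactly the statement being cited.
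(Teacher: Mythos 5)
Your proposal is correct, and its core follows the same route as the paper: use the curvature hypothesis to force the harmonic form to be self-dual (after an orientation choice), rescale to length $\sqrt{2}$ to get an almost-K\"ahler structure, observe that either hypothesis forces $s>0$, and then apply Liu's theorem, with definiteness of the intersection form singling out $\mathbb{CP}_{2}$ among rational and ruled surfaces. The genuine difference is your treatment of orientability. The paper handles the non-orientable case \emph{last}: it runs the whole argument on the oriented double cover $\tilde M$, concludes $\tilde M\cong\mathbb{CP}_{2}$, and then gets a contradiction because the deck involution is an orientation-reversing diffeomorphism, forcing $\tau(\tilde M)=0\neq 1$. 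Your argument is more economical: you only need the self-dual/anti-self-dual dichotomy on $\tilde M$ (Proposition 1 of the paper), and the fact that the deck involution fixes $\pi^{*}\omega$ while anti-commuting with the Hodge star already kills $\pi^{*}\omega$, a contradiction. This buys a cleaner, more self-contained orientability step that does not depend on the full classification; the paper's version buys brevity, since it reuses the oriented case verbatim. Two smaller remarks: your Seiberg--Witten digression (Taubes nonvanishing versus positive-scalar-curvature vanishing to force $b^{+}=1$, plus minimality via absence of $(-1)$-classes) is correct but redundant — Liu's theorem as cited in the paper applies to any compact symplectic 4-manifold with a positive scalar curvature metric, with no $b^{+}$ or minimality hypothesis, and every non-$\mathbb{CP}_{2}$ rational or ruled surface (minimal or not) already fails to have definite intersection form. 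Also, the paper treats the two curvature hypotheses by reducing $K^{\perp}>0$ to $K^{\perp}<\frac{s}{4}$ via its Corollary 1, whereas you treat them in parallel; both work, and your direct frame-summing proof that either hypothesis gives $s>0$ agrees with the paper's Corollary 2.
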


\begin{Theorem}
There is an infinite dimensional moduli space of metrics on $\mathbb{CP}_{2}$
which have a harmonic 2-form of constant length and have $K^{\perp}>0$. 
\end{Theorem}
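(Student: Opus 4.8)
The plan is to realize the required metrics as almost-K\"ahler metrics close to the Fubini-Study metric $g_{0}$ on $\mathbb{CP}_{2}$, and then to promote the resulting family of metrics to an infinite-dimensional family of isometry classes. Recall that $g_{0}$ has positive sectional curvature, so in particular $K^{\perp}>0$, and that its K\"ahler form $\omega_{0}$ is harmonic, self-dual, and of constant length $\sqrt{2}$; fix the orientation for which $\omega_{0}$ is self-dual. By the discussion preceding these theorems, any almost-complex structure $J$ compatible with $\omega_{0}$ produces an almost-K\"ahler metric $g_{J}(\cdot,\cdot)=\omega_{0}(\cdot,J\cdot)$ for which $\omega_{0}$ remains a self-dual harmonic $2$-form of constant length $\sqrt{2}$, since $d\omega_{0}=0$ together with self-duality forces $d{*}\omega_{0}=d\omega_{0}=0$. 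Thus every such $g_{J}$ automatically carries a harmonic $2$-form of constant length.

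First I would consider the space $\mathcal{J}_{\omega_{0}}$ of $\omega_{0}$-compatible almost-complex structures, an infinite-dimensional contractible manifold whose tangent space at $J_{0}$ consists of the $g_{0}$-symmetric, $J_{0}$-anti-linear endomorphisms of $TM$ (the sections of a real rank-$6$ bundle). The assignment $J\mapsto g_{J}$ is continuous in the $C^{2}$ topology, and $K^{\perp}>0$ is an open condition on metrics in that topology; since $g_{0}=g_{J_{0}}$ satisfies it strictly, there is a neighborhood $\mathcal{U}$ of $J_{0}$ in $\mathcal{J}_{\omega_{0}}$ such that $g_{J}$ has $K^{\perp}>0$ for every $J\in\mathcal{U}$. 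This already produces an infinite-dimensional family of metrics with the two desired properties.

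To see that this family descends to an infinite-dimensional family of isometry classes, I would invoke Seaman's uniqueness statement (Theorem 1, together with its refinements under $K^{\perp}>0$). Each $g_{J}$ with $J\in\mathcal{U}$ has $K^{\perp}>0$, so its harmonic self-dual $2$-form of constant length is unique up to sign and must equal $\pm\omega_{0}$; moreover $b_{1}(\mathbb{CP}_{2})=0$ and $b^{-}=0$ preclude nonzero harmonic anti-self-dual forms, so any isometry $\phi\colon(M,g_{J})\to(M,g_{J'})$ is orientation-preserving and satisfies $\phi^{*}\omega_{0}=\pm\omega_{0}$. Hence every such isometry is a symplectomorphism or anti-symplectomorphism of $(M,\omega_{0})$, and since $g_{J}$ is determined by the pair $(\omega_{0},J)$ we conclude that $g_{J}$ and $g_{J'}$ are isometric precisely when $J$ and $J'$ lie in the same orbit of $\mathrm{Symp}(M,\omega_{0})$, up to the finite ambiguity coming from the sign. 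Locally near $g_{0}$ the moduli space of our metrics is therefore identified with $\mathcal{U}/\mathrm{Symp}(M,\omega_{0})$.

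The main obstacle is the final dimension count: showing that $\mathcal{U}/\mathrm{Symp}(M,\omega_{0})$ is genuinely infinite-dimensional near $J_{0}$. Infinitesimally, the tangent directions are sections of the rank-$6$ bundle above, while the tangent space to the symplectomorphism orbit is the image of the first-order operator $X\mapsto L_{X}J_{0}$ applied to symplectic vector fields, which on $\mathbb{CP}_{2}$ amount to only ``one function's worth'' of freedom because $H^{1}(\mathbb{CP}_{2};\mathbb{R})=0$ forces every symplectic vector field to be Hamiltonian. A symbol computation should show that this operator has infinite-dimensional cokernel, so the quotient has an infinite-dimensional tangent space and the family of isometry classes is infinite-dimensional. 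An alternative, more hands-on route to the same conclusion is to separate infinitely many of the $g_{J}$ by a diffeomorphism-invariant functional built from curvature, such as the profile of the scalar curvature of $g_{J}$, but the cokernel computation is the cleaner way to make ``infinite-dimensional'' precise.
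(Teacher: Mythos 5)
Your proposal is correct in strategy but takes a genuinely different route from the paper. The paper deforms the conformal class: for any metric $g$ close to the Fubini--Study metric $g_{0}$ in $C^{\infty}$, Hodge theory (using $b_{+}(\mathbb{CP}_{2})=1$) produces a self-dual harmonic form $\omega_{g}$, and the conformal rescaling $g'=u^{2}g$ with $u^{2}=\frac{1}{\sqrt{2}}|\omega_{g}|_{g}$ is the unique almost-K\"ahler representative of $[g]$; elliptic estimates are then required to show that $u$ is $C^{2}$-close to $1$, so that $g'$ inherits positive sectional curvature, and the infinite-dimensionality is quoted from the standard fact that conformal classes modulo diffeomorphism form an infinite-dimensional space. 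You instead freeze the symplectic form $\omega_{0}$ and vary the compatible almost-complex structure $J$. That buys a genuine simplification: for every compatible $J$ the form $\omega_{0}$ is automatically $g_{J}$-self-dual, closed, hence co-closed and harmonic, of constant length $\sqrt{2}$, so you need no Hodge theory, no conformal gauge-fixing, and no elliptic estimates --- only the $C^{2}$-openness of $K^{\perp}>0$. The price is paid at the moduli stage: where the paper can cite the known structure of the space of conformal classes modulo diffeomorphism (Ebin-type slice theory), you must analyze the orbit space of the infinite-dimensional group $\mathrm{Symp}(M,\omega_{0})$ acting on $\mathcal{J}_{\omega_{0}}$.

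Two remarks on that stage. First, you do not need Seaman's uniqueness theorem (which, as Theorem 1, assumes positive sectional curvature, and whose extension to $K^{\perp}>0$ the paper only asserts in a remark): since $b_{2}(\mathbb{CP}_{2})=1$, the space of $g_{J}$-harmonic 2-forms is one-dimensional, so any isometry satisfies $\phi^{*}\omega_{0}=c\,\omega_{0}$, preservation of pointwise norm forces $c=\pm1$, and orientation-preservation follows because $\tau(\mathbb{CP}_{2})\neq 0$ rules out orientation-reversing diffeomorphisms; this replaces your appeal to Theorem 1 by pure Hodge theory. Second, the final dimension count is the one place where your argument is a sketch rather than a proof. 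The linear claim is true and can be made precise by jet counting: $f\mapsto L_{X_{f}}J_{0}$ is a second-order operator from a rank-1 bundle to a rank-6 bundle, and the induced map on jets at a point sends a space of dimension $\binom{k+6}{4}$ into one of dimension $6\binom{k+4}{4}$, so the image of the operator has infinite codimension. But passing from this linearized statement to the assertion that the actual quotient $\mathcal{U}/\mathrm{Symp}(M,\omega_{0})$ is infinite-dimensional requires a slice theorem for the symplectomorphism action in a suitable Sobolev or ILH setting; this is standard but must be invoked explicitly, and it plays exactly the role that Ebin's slice theorem plays in the fact the paper quotes. With those two repairs your argument is complete and, arguably, more self-contained on the analytic side than the paper's.
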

\vspace{20pt}

Let $(M^{4}, g)$ be an oriented 4-dimensional riemannian manifold.
We identify vectors and forms canonically using a metric $g$. 
Let us fix one point $p$ on $M$. 
Note that $W_{\pm}=R-\frac{s}{12}I_{\pm} : \Lambda^{\pm}\to\Lambda^{\pm}$ is symmetric since $R$ is symmetric. 
Thus there is an orthonormal basis of eigenvectors. Let $\alpha_{i}^{+}$ for $i=1, 2, 3$ be such eigenvectors of $W_{+}$ at $p$ 
with eigenvalues $\lambda_{i}^{+}$ and 
$\alpha_{i}^{-}$ for $i=1, 2, 3$ be eigenvectors of $W_{-}$ with eigenvalues $\lambda_{i}^{-}$ at $p$. 
We assume $\lambda_{1}^{+}\leq \lambda_{2}^{+}\leq \lambda_{3}^{+}$. 
Similarly, for $W_{-}$, we assume $\lambda_{1}^{-}\leq \lambda_{2}^{-}\leq \lambda_{3}^{-}$.
From the First Bianchi identity, we get $W_{\pm}$ are trace-free, and therefore we have $\lambda_{1}^{+}+ \lambda_{2}^{+}+\lambda_{3}^{+}=0$
and $\lambda_{1}^{-}+ \lambda_{2}^{-}+\lambda_{3}^{-}=0$.
Thus, we have 
\[\lambda_{1}^{\pm}\leq 0, \lambda_{3}^{\pm}\geq 0\]
and
\[-\frac{1}{2}\lambda_{1}^{\pm}\leq \lambda_{3}^{\pm}\leq -2\lambda_{1}^{\pm}.\]

Let us consider the following expressions:
\[K^{\perp}_{1}=\min\limits_{P\subset T_{p}M}\{\frac{K(P)+K(P^{\perp})}{2}\},\]
\[K^{\perp}_{3}=\max\limits_{P\subset T_{p}M}\{\frac{K(P)+K(P^{\perp})}{2}\}.\]
We provide a proof of the following Lemma in detail, which is stated in [4]. 

\newtheorem{Lemma}{Lemma}
\begin{Lemma}
\[2K^{\perp}_{1}=\frac{s}{6}+\lambda_{1}^{+}+\lambda_{1}^{-}\]
\[2K^{\perp}_{3}=\frac{s}{6}+\lambda_{3}^{+}+\lambda_{3}^{-}\]

\end{Lemma}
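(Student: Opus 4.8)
The plan is to reduce the extremization of the biorthogonal curvature over all tangent 2-planes to two independent Rayleigh-quotient problems, one for $W_{+}$ and one for $W_{-}$. First I would fix a plane $P\subset T_{p}M$ spanned by an orthonormal pair $e_{1}, e_{2}$, completed to an oriented orthonormal frame so that $P^{\perp}=\mathrm{span}(e_{3}, e_{4})$. Setting $\omega=e_{1}\wedge e_{2}$ and $\eta=e_{3}\wedge e_{4}$, I have $K(P)=\langle R\omega,\omega\rangle$ and $K(P^{\perp})=\langle R\eta,\eta\rangle$, where $R$ is the curvature operator on $\Lambda^{2}$. Since $\eta=*\omega$, the self-dual/anti-self-dual decomposition $\omega=\omega^{+}+\omega^{-}$ gives $\eta=\omega^{+}-\omega^{-}$. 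Expanding the sum $\langle R\omega,\omega\rangle+\langle R\eta,\eta\rangle$, the cross terms coupling $\Lambda^{+}$ to $\Lambda^{-}$ enter with opposite signs in the two summands and cancel, leaving
\[\frac{K(P)+K(P^{\perp})}{2}=\langle R\omega^{+},\omega^{+}\rangle+\langle R\omega^{-},\omega^{-}\rangle.\]

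Next I would substitute the block description of the curvature operator. On $\Lambda^{\pm}$ one has $R=W_{\pm}+\frac{s}{12}I_{\pm}$, and for a unit decomposable 2-form $|\omega^{+}|^{2}=|\omega^{-}|^{2}=\frac{1}{2}$, so that $|\omega^{+}|^{2}+|\omega^{-}|^{2}=1$ and the identity becomes
\[\frac{K(P)+K(P^{\perp})}{2}=\langle W_{+}\omega^{+},\omega^{+}\rangle+\langle W_{-}\omega^{-},\omega^{-}\rangle+\frac{s}{12}.\]
This isolates the scalar-curvature contribution and splits the remaining dependence on $P$ into a purely self-dual piece and a purely anti-self-dual piece.

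The decisive structural input is that the Grassmannian of oriented 2-planes in $T_{p}M$ is $S^{2}\times S^{2}$: as $P$ ranges over all planes, the rescaled components $\sqrt{2}\,\omega^{+}$ and $\sqrt{2}\,\omega^{-}$ range independently over the unit spheres of $\Lambda^{+}$ and $\Lambda^{-}$. This independence lets the minimization and the maximization decouple into two separate eigenvalue problems. Applying the Rayleigh-quotient characterization to the symmetric operators $W_{\pm}$ on the sphere of radius $1/\sqrt{2}$, I obtain $\min\langle W_{\pm}\omega^{\pm},\omega^{\pm}\rangle=\tfrac{1}{2}\lambda_{1}^{\pm}$ and $\max\langle W_{\pm}\omega^{\pm},\omega^{\pm}\rangle=\tfrac{1}{2}\lambda_{3}^{\pm}$; adding the two pieces, inserting $\frac{s}{12}$, and multiplying by $2$ yields both claimed formulas.

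The main obstacle is justifying the decoupling in the third step, namely verifying that every pair of unit vectors $(u^{+}, u^{-})\in\Lambda^{+}\times\Lambda^{-}$ is realized as $(\sqrt{2}\,\omega^{+},\sqrt{2}\,\omega^{-})$ for a genuine oriented 2-plane, so that the two extrema are attained simultaneously and independently. I would settle this through the standard identification of the oriented Grassmannian with $S^{2}\times S^{2}$: a 2-form $\omega$ is decomposable precisely when $|\omega^{+}|=|\omega^{-}|$, which normalizing to length one enforces, and surjectivity onto the product of spheres is seen by writing down an orthonormal frame that realizes any prescribed pair of components. The remaining computations are routine.
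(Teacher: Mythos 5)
Your proof is correct and follows essentially the same route as the paper: both rest on the identity $\frac{K(P)+K(P^{\perp})}{2}=\frac{s}{12}+\langle W_{+}\omega^{+},\omega^{+}\rangle+\langle W_{-}\omega^{-},\omega^{-}\rangle$ together with the Rayleigh-quotient bounds, and both settle attainment by the fact that a 2-form with $|\omega^{+}|=|\omega^{-}|$ (equivalently $\omega\wedge\omega=0$) is decomposable and hence spans a plane. The only cosmetic difference is that you phrase the attainment step as surjectivity of the oriented-Grassmannian identification with $S^{2}\times S^{2}$, whereas the paper constructs the single extremal plane directly from $\alpha_{1}^{+}+\alpha_{1}^{-}$; the underlying mechanism is identical.
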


\begin{proof}
Any plane $P, P^{\perp}\subset T_{p}M$ can be represented by an orthonormal frame $\{e_{1}, e_{2}, e_{3}, e_{4}\}$
such that $P=span\{e_{1}, e_{2}\}$ and $P^{\perp}=span\{e_{3}, e_{4}\}$. 
Since we identify vectors and forms using $g$, we get the corresponding orthonormal coframe, which we still denote by $\{e_{1}, e_{2}, e_{3}, e_{4}\}$.
We suppose $\{e_{1}, e_{2}, e_{3}, e_{4}\}$ is positively oriented. Then we have 
\[K(P)+K(P^{\perp})\]
\[=\frac{s}{6}+W_{+}(\frac{e_{1}\wedge e_{2}+e_{3}\wedge e_{4}}{\sqrt{2}}, \frac{e_{1}\wedge e_{2}+e_{3}\wedge e_{4}}{\sqrt{2}})
+W_{-}(\frac{e_{1}\wedge e_{2}-e_{3}\wedge e_{4}}{\sqrt{2}}, \frac{e_{1}\wedge e_{2}-e_{3}\wedge e_{4}}{\sqrt{2}}).\]
Since $\frac{e_{1}\wedge e_{2}+e_{3}\wedge e_{4}}{\sqrt{2}}=a_{1}\alpha_{1}^{+}+a_{2}\alpha_{2}^{+}+a_{3}\alpha_{3}^{+}$  for some $a_{i}\in \mathbb{R}$ 
such that  $a_{1}^{2}+a_{2}^{2}+a_{3}^{2}=1$, 
 we have 
\[W_{+}(\frac{e_{1}\wedge e_{2}+e_{3}\wedge e_{4}}{\sqrt{2}}, \frac{e_{1}\wedge e_{2}+e_{3}\wedge e_{4}}{\sqrt{2}})
=\lambda_{1}^{+}a_{1}^{2}+\lambda_{2}^{+}a_{2}^{2}+\lambda_{3}^{+}a_{3}^{2}\]
\[\geq \lambda_{1}^{+}(a_{1}^{2}+a_{2}^{2}+a_{3}^{2})=\lambda_{1}^{+}.\]
Thus, we get 
\[K_{1}^{\perp}\geq \frac{s}{6}+\lambda_{1}^{+}+\lambda_{1}^{-}.\]

Note that $\alpha_{1}^{+}$ is self-dual and $\alpha_{1}^{-}$ is anti-self-dual with $|\alpha_{1}^{+}|=|\alpha_{1}^{-}|=1$. 
Then we have 
\[(\alpha_{1}^{+}+\alpha_{1}^{-})\wedge (\alpha_{1}^{+}+\alpha_{1}^{-})=|\alpha_{1}^{+}|^{2}-|\alpha_{1}^{-}|^{2}=0.\]
On 2-forms on a 4-manifold, this implies $\alpha_{1}^{+}+\alpha_{1}^{-}$ is decomposable.
Then, there exist orthonormal vectors $\{f_{1}, f_{2}\}$ such that 
\[\alpha_{1}^{+}+\alpha_{1}^{-}=\sqrt{2}f_{1}\wedge f_{2}.\]
Thus, there exists a positively oriented orthonormal coframe $\{f_{1}, f_{2}, f_{3}, f_{4}\}$ at $p$ such that 
\[\alpha_{1}^{+}=\frac{1}{\sqrt{2}}(f_{1}\wedge f_{2}+f_{3}\wedge f_{4})\]
and 
\[\alpha_{1}^{-}=\frac{1}{\sqrt{2}}(f_{1}\wedge f_{2}-f_{3}\wedge f_{4}).\]
By considering $P=span\{f_{1}, f_{2}\}$, we get $K_{1}^{\perp}\leq \frac{s}{6}+\lambda_{1}^{+}+\lambda_{1}^{-}.$
Thus, we get $2K^{\perp}_{1}=\frac{s}{6}+\lambda_{1}^{+}+\lambda_{1}^{-}$. Then the claim follows. 

In a similar way, we can show that $2K^{\perp}_{3}=\frac{s}{6}+\lambda_{3}^{+}+\lambda_{3}^{-}$.

\end{proof}

\newtheorem{Corollary}{Corollary}

\begin{Corollary}
Let $(M^{4}, g)$ be an oriented  riemannian manifold. Suppose $K_{1}^{\perp}>0$. 
Then  $K_{3}^{\perp}<\frac{s}{4}$. 
\end{Corollary}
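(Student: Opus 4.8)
The plan is to reduce the entire statement to the two identities supplied by the Lemma together with the algebraic constraints on the eigenvalues of $W_{\pm}$ recorded just before it. Since $K_1^{\perp}$ and $K_3^{\perp}$ are defined pointwise at the fixed point $p$, the assertion is a pointwise one, and once the hypothesis and the desired conclusion are rewritten in terms of $\lambda_1^{\pm}$ and $\lambda_3^{\pm}$ it becomes a purely algebraic inequality.

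First I would record the Lemma at $p$, namely
\[2K_1^{\perp}=\frac{s}{6}+\lambda_1^{+}+\lambda_1^{-},\qquad 2K_3^{\perp}=\frac{s}{6}+\lambda_3^{+}+\lambda_3^{-}.\]
The hypothesis $K_1^{\perp}>0$ is then equivalent to $\lambda_1^{+}+\lambda_1^{-}>-\frac{s}{6}$, while the conclusion $K_3^{\perp}<\frac{s}{4}$ is equivalent to $\lambda_3^{+}+\lambda_3^{-}<\frac{s}{3}$. So the task is to bound $\lambda_3^{+}+\lambda_3^{-}$ from above using a lower bound on $\lambda_1^{+}+\lambda_1^{-}$.

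Next I would invoke the eigenvalue estimate already established in the excerpt, which follows from the trace-freeness of $W_{\pm}$ (First Bianchi identity) together with the ordering $\lambda_1^{\pm}\le\lambda_2^{\pm}\le\lambda_3^{\pm}$, namely $\lambda_3^{\pm}\le-2\lambda_1^{\pm}$. Summing the $+$ and $-$ versions gives
\[\lambda_3^{+}+\lambda_3^{-}\le-2(\lambda_1^{+}+\lambda_1^{-}).\]
Combining this with the hypothesis, $\lambda_1^{+}+\lambda_1^{-}>-\frac{s}{6}$ yields $-2(\lambda_1^{+}+\lambda_1^{-})<\frac{s}{3}$, and therefore $\lambda_3^{+}+\lambda_3^{-}<\frac{s}{3}$, which is exactly $K_3^{\perp}<\frac{s}{4}$.

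I do not expect any genuine analytic obstacle here; the entire content sits in the Lemma and in the trace-free constraint, so the only real choice is to pick the correct combination $\lambda_3^{\pm}\le-2\lambda_1^{\pm}$ rather than the companion bound $\lambda_3^{\pm}\ge-\frac12\lambda_1^{\pm}$. The one point that warrants care is the bookkeeping of strict versus non-strict inequalities: the estimate $\lambda_3^{\pm}\le-2\lambda_1^{\pm}$ is non-strict, but it is chained to the strict inequality produced by $K_1^{\perp}>0$, so the resulting inequality $K_3^{\perp}<\frac{s}{4}$ is strict, as claimed.
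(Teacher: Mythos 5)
Your proposal is correct and follows essentially the same route as the paper: both reduce the statement to the Lemma's identities $2K_1^{\perp}=\frac{s}{6}+\lambda_1^{+}+\lambda_1^{-}$, $2K_3^{\perp}=\frac{s}{6}+\lambda_3^{+}+\lambda_3^{-}$ and then apply the trace-free bound $\lambda_3^{\pm}\leq -2\lambda_1^{\pm}$ to each of $W_{+}$ and $W_{-}$ and sum. The only difference is cosmetic bookkeeping (you rearrange to $\lambda_3^{+}+\lambda_3^{-}<\frac{s}{3}$, the paper keeps the chain in the form $0<\frac{s}{6}+\lambda_1^{+}+\lambda_1^{-}\leq\frac{s}{6}-\frac{\lambda_3^{+}}{2}-\frac{\lambda_3^{-}}{2}$), and your explicit handling of strict versus non-strict inequalities is if anything slightly more careful than the paper's.
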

\begin{proof}
Note that  $K_{1}^{\perp}>0$ implies $\frac{s}{6}+\lambda_{1}^{+}+\lambda_{1}^{-}>0$. 
Since we have
\[2\lambda_{1}^{\pm}\leq -\lambda_{3}^{\pm}, \]
we get 
\[\frac{s}{12}+\lambda_{1}^{\pm}\leq \frac{s}{12}-\frac{\lambda_{3}^{\pm}}{2}.\]
Thus, we have 
\[0<\frac{s}{6}+\lambda_{1}^{+}+\lambda_{1}^{-}<\frac{s}{6}-\frac{\lambda_{3}^{+}}{2}-\frac{\lambda_{3}^{-}}{2}.\]
\end{proof}

\begin{Corollary}
Let $(M^{4}, g)$ be an oriented riemannian manifold. If $K^{\perp}>0$ or $K^{\perp}<\frac{s}{4}$, 
then the scalar curvature $s$ is positive. 
\end{Corollary}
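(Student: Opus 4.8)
The plan is to reduce both hypotheses to pointwise statements about the extremal quantities $K_1^{\perp}$ and $K_3^{\perp}$ and then read off the sign of $s$ from the Lemma together with the trace-free sign constraints $\lambda_1^{\pm}\le 0$ and $\lambda_3^{\pm}\ge 0$ established just before the Lemma. First I would observe that the global hypothesis $K^{\perp}>0$ on $M$ is equivalent to the pointwise condition $K_1^{\perp}>0$ at every $p\in M$, since $K_1^{\perp}$ is by definition the minimum of $K^{\perp}(P)$ over the (compact) space of planes $P\subset T_pM$; likewise $K^{\perp}<\tfrac{s}{4}$ on $M$ is equivalent to $K_3^{\perp}<\tfrac{s}{4}$ at every point, because $K_3^{\perp}$ is the corresponding maximum. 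Thus it suffices to argue at an arbitrary fixed point $p$ and conclude $s(p)>0$.

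In the first case the Lemma gives $2K_1^{\perp}=\tfrac{s}{6}+\lambda_1^{+}+\lambda_1^{-}$, so that
\[\frac{s}{6}=2K_1^{\perp}-\lambda_1^{+}-\lambda_1^{-}.\]
Since $\lambda_1^{\pm}\le 0$, both $-\lambda_1^{+}$ and $-\lambda_1^{-}$ are nonnegative, and as $K_1^{\perp}>0$ the right-hand side is strictly positive; hence $s>0$. In the second case the Lemma gives $2K_3^{\perp}=\tfrac{s}{6}+\lambda_3^{+}+\lambda_3^{-}$, and the hypothesis $K_3^{\perp}<\tfrac{s}{4}$ yields $\tfrac{s}{6}+\lambda_3^{+}+\lambda_3^{-}<\tfrac{s}{2}$, that is $\lambda_3^{+}+\lambda_3^{-}<\tfrac{s}{3}$. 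Because $\lambda_3^{\pm}\ge 0$, the left-hand side is nonnegative, which forces $\tfrac{s}{3}>0$ and therefore $s>0$. Since $p$ was arbitrary, $s>0$ on all of $M$.

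I do not expect any serious obstacle here: once the sign conditions on the eigenvalues of $W_{\pm}$ are in hand, the conclusion is essentially immediate from the Lemma. The only point requiring a little care is the translation of the global curvature hypotheses into the pointwise extremal quantities $K_1^{\perp}$ and $K_3^{\perp}$, and the bookkeeping that keeps the inequalities strict; in both cases the strictness is inherited directly from the strict curvature assumption, so no compactness or continuity argument beyond the pointwise attainment of the extrema is needed.
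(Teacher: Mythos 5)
Your proof is correct. For the case $K^{\perp}<\frac{s}{4}$ it coincides exactly with the paper's argument: both translate the hypothesis via the Lemma into $\frac{s}{6}+\lambda_{3}^{+}+\lambda_{3}^{-}<\frac{s}{2}$ and use $\lambda_{3}^{\pm}\geq 0$ to force $s>0$. The difference is in the case $K^{\perp}>0$: the paper disposes of it by a direct orthonormal-basis computation (summing the sectional curvatures of the three mutually orthogonal coordinate plane pairs, which exhibits $\frac{s}{2}$ as a sum of three biorthogonal curvatures, each positive by hypothesis), without invoking the Lemma at all; you instead run the same eigenvalue argument as in the other case, writing $\frac{s}{6}=2K_{1}^{\perp}-\lambda_{1}^{+}-\lambda_{1}^{-}$ and using $\lambda_{1}^{\pm}\leq 0$. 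Your route is more uniform, handling both hypotheses through the single identity of the Lemma together with the trace-free sign constraints, while the paper's direct check is more elementary for that case since it needs no Weyl curvature decomposition. Either way the pointwise reduction you spell out (that $K^{\perp}>0$ everywhere is the same as $K_{1}^{\perp}>0$ pointwise, and $K^{\perp}<\frac{s}{4}$ the same as $K_{3}^{\perp}<\frac{s}{4}$, by compactness of the Grassmannian of $2$-planes) is exactly what the paper leaves implicit, so there is no gap.
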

\begin{proof}
Using an orthonormal basis, we can check directly, that $K^{\perp}>0$ implies $s>0$. 
Note that $K^{\perp}<\frac{s}{4}$ is equivalent to $\frac{2s}{3}-2\lambda_{3}^{+}-2\lambda_{3}^{-}>0$. 
Since $\lambda_{3}^{\pm}\geq 0$, we get if $K^{\perp}<\frac{s}{4}$, then $s>0$. 
\end{proof}

Let us write $r_{i}^{\pm}=\frac{s(p)}{3}-2\lambda_{i}^{\pm}$. 
\begin{Lemma}
Let $(M^{4}, g)$ be an oriented riemannian manifold. Suppose $K_{3}^{\perp}<\frac{s}{4}$. 
Then at a point $p$, we have $0<r_{i}^{+}+r_{j}^{-}$ for $i, j=1, 2, 3$. 

\end{Lemma}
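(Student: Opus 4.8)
The plan is to translate everything into a statement about the Weyl eigenvalues via the already-established Lemma 1, after which the claim reduces to the monotonicity of the ordered eigenvalues. First I would unpack the definition $r_{i}^{\pm}=\frac{s(p)}{3}-2\lambda_{i}^{\pm}$ to rewrite the target inequality. For fixed $i,j$ we have
\[r_{i}^{+}+r_{j}^{-}=\frac{2s}{3}-2\lambda_{i}^{+}-2\lambda_{j}^{-},\]
so the assertion $0<r_{i}^{+}+r_{j}^{-}$ is exactly equivalent to
\[\lambda_{i}^{+}+\lambda_{j}^{-}<\frac{s}{3}.\]
Thus it suffices to establish this last inequality for every pair $i,j\in\{1,2,3\}$.

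Next I would extract a bound on the largest eigenvalues from the curvature hypothesis. By Lemma 1 we have $2K_{3}^{\perp}=\frac{s}{6}+\lambda_{3}^{+}+\lambda_{3}^{-}$, so the assumption $K_{3}^{\perp}<\frac{s}{4}$ gives
\[\frac{s}{6}+\lambda_{3}^{+}+\lambda_{3}^{-}<\frac{s}{2},\]
which rearranges to
\[\lambda_{3}^{+}+\lambda_{3}^{-}<\frac{s}{3}.\]
This is precisely the required inequality in the extremal case $i=j=3$.

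Finally I would reduce the general pair to this extremal case using the ordering $\lambda_{1}^{\pm}\leq\lambda_{2}^{\pm}\leq\lambda_{3}^{\pm}$ fixed earlier in the excerpt. Since $\lambda_{i}^{+}\leq\lambda_{3}^{+}$ and $\lambda_{j}^{-}\leq\lambda_{3}^{-}$ for all $i,j$, we obtain
\[\lambda_{i}^{+}+\lambda_{j}^{-}\leq\lambda_{3}^{+}+\lambda_{3}^{-}<\frac{s}{3},\]
and combining with the first step yields $r_{i}^{+}+r_{j}^{-}>0$ for all $i,j=1,2,3$, as claimed. I do not expect a genuine obstacle here: the entire content is carried by Lemma 1, which converts the biorthogonal curvature bound into the eigenvalue inequality, and the only additional ingredient is the trivial monotonicity of the ordered spectrum. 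The sole point meriting care is bookkeeping the factor conventions (the $\frac{s}{6}$ versus $\frac{s}{3}$ and the factor of $2$ in $r_{i}^{\pm}$) so that the arithmetic lines up cleanly.
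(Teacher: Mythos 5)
Your proof is correct and takes essentially the same route as the paper: both use Lemma 1 to convert the hypothesis $K_{3}^{\perp}<\frac{s}{4}$ into $\lambda_{3}^{+}+\lambda_{3}^{-}<\frac{s}{3}$ (equivalently $r_{3}^{+}+r_{3}^{-}>0$), and then invoke the ordering $\lambda_{i}^{\pm}\leq\lambda_{3}^{\pm}$ to deduce $r_{i}^{+}+r_{j}^{-}\geq r_{3}^{+}+r_{3}^{-}>0$ for all $i,j$. The paper's version is merely terser, leaving the monotonicity step implicit, so your write-up is if anything more complete.
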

\begin{proof}
Note that the condition $K_{3}^{\perp}<\frac{s}{4}$ is equivalent to the following inequality,
\[0<\frac{2s}{3}-2\lambda_{3}^{+}-2\lambda_{3}^{-}.\]
Thus, at a point $p$, we have either $\frac{s}{3}-2\lambda_{3}^{+}>0$ or $\frac{s}{3}-2\lambda_{3}^{-}>0$. 
Also we have  $0<r_{i}^{+}+r_{j}^{-}$ for $i, j=1, 2, 3$.

\end{proof}

\begin{Lemma}
Let $(M^{4}, g)$ be a compact, oriented riemannian manifold such that $K_{3}^{\perp}<\frac{s}{4}$. 
If $(M, g)$ admits a harmonic 2-form of constant length, then either $\frac{s}{3}-2W_{+}$ or $ \frac{s}{3}-2W_{-}$ is positive on $M$. 
\end{Lemma}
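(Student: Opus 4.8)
The plan is to exploit the harmonic $2$-form directly through two Bochner identities, one for its self-dual and one for its anti-self-dual part, and then turn the resulting pointwise inequality into a global statement by connectedness. Write the given harmonic form as $\omega=\phi+\psi$ with $\phi\in\Lambda^{+}$, $\psi\in\Lambda^{-}$; since $M$ is compact and oriented, $*$ commutes with the Hodge Laplacian, so $*\omega$ is again harmonic and therefore $\phi=\tfrac12(\omega+*\omega)$ and $\psi=\tfrac12(\omega-*\omega)$ are each harmonic. Setting $u=|\phi|^{2}$, $v=|\psi|^{2}$, the pointwise orthogonality of $\Lambda^{+}$ and $\Lambda^{-}$ gives $u+v=|\omega|^{2}\equiv c$ for a constant $c>0$. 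The point of the argument will be that this \emph{constant total length} is exactly what lets the two Laplacian terms cancel when the two Bochner identities are added.

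I would then feed $\phi$ into the Weitzenböck formula $0=\nabla^{*}\nabla\phi+(\tfrac{s}{3}-2W_{+})\phi$ and combine it with the Bochner identity $\tfrac12\Delta_{0}u=\langle\nabla^{*}\nabla\phi,\phi\rangle-|\nabla\phi|^{2}$, where $\Delta_{0}$ is the nonnegative Laplacian on functions, to obtain
\[\tfrac12\Delta_{0}u=-|\nabla\phi|^{2}-\langle(\tfrac{s}{3}-2W_{+})\phi,\phi\rangle\le -r_{3}^{+}\,u,\]
using that $r_{3}^{+}$ is the least eigenvalue of $\tfrac{s}{3}-2W_{+}$ so that $\langle(\tfrac{s}{3}-2W_{+})\phi,\phi\rangle\ge r_{3}^{+}u$. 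The identical computation for $\psi$ gives $\tfrac12\Delta_{0}v\le -r_{3}^{-}v$. Adding the two inequalities and using $\Delta_{0}(u+v)=\Delta_{0}c=0$ eliminates the left-hand sides and yields the pointwise inequality
\[r_{3}^{+}\,|\phi|^{2}+r_{3}^{-}\,|\psi|^{2}\le 0\]
on all of $M$, with no integration and no boundary term.

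Finally I would run a connectedness argument. By the preceding Lemma the hypothesis $K_{3}^{\perp}<\tfrac{s}{4}$ is precisely $r_{3}^{+}+r_{3}^{-}>0$, so the open sets $A=\{r_{3}^{+}>0\}$ and $B=\{r_{3}^{-}>0\}$ cover $M$. At any point lying in $A\cap B$ the displayed inequality exhibits two nonnegative terms with sum $\le 0$, forcing $|\phi|^{2}=|\psi|^{2}=0$ and contradicting $u+v=c>0$; hence $A\cap B=\emptyset$. Since $r_{3}^{\pm}$ are continuous, $A$ and $B$ are disjoint open sets covering the connected manifold $M$, so one of them is empty. If $A=\emptyset$ then $r_{3}^{+}\le 0$ everywhere, whence $r_{3}^{-}>-r_{3}^{+}\ge 0$, i.e. $\tfrac{s}{3}-2W_{-}>0$ on $M$; if $B=\emptyset$ then $\tfrac{s}{3}-2W_{+}>0$ on $M$. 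The main obstacle is the middle step: one must be confident that it is the constant-length hypothesis alone that kills the Laplacian contributions upon summing the two Bochner inequalities, so that the clean pointwise inequality $r_{3}^{+}|\phi|^{2}+r_{3}^{-}|\psi|^{2}\le 0$ holds with correct signs; once it does, the strict cross-positivity $r_{3}^{+}+r_{3}^{-}>0$ promotes a pointwise fact to the global dichotomy purely through connectedness.
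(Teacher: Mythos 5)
Your proof is correct, and its Weitzenböck half is essentially the paper's argument in disguise: since $\nabla$, $\Delta$, and the curvature term all preserve the splitting $\Lambda^{2}=\Lambda^{+}\oplus\Lambda^{-}$, the paper's single identity $0=\frac{1}{2}\Delta|\omega|^{2}+|\nabla\omega|^{2}+\langle(\frac{s}{3}-2W)\omega,\omega\rangle$, applied to the full form $\omega$ with $|\omega|^{2}$ constant, is exactly the sum of your two Bochner identities for $\phi$ and $\psi$, and both yield the same pointwise dichotomy: at no point can $r_{3}^{+}$ and $r_{3}^{-}$ both be positive. Where you genuinely diverge from the paper is the globalization step. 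The paper fixes a point $p$ with, say, $r_{3}^{+}(p)\leq 0$, and then excludes zeros of the $r_{i}^{-}$ by introducing the closed sets $A_{i}=\{r_{i}^{-}=0\}$, taking a distance-minimizing geodesic from $p$ to the nearest such zero, and applying the intermediate value theorem along it; this invokes completeness of $(M,d_{g})$ and the existence of minimizing curves. You instead observe that $A=\{r_{3}^{+}>0\}$ and $B=\{r_{3}^{-}>0\}$ are open (the ordered eigenvalues $r_{i}^{\pm}$ are continuous, a fact the paper also uses), that they cover $M$ by Lemma 2, and that they are disjoint by the pointwise dichotomy, so connectedness of $M$ forces one of them to be empty, after which $r_{3}^{+}+r_{3}^{-}>0$ gives positivity of the other operator everywhere. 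This is shorter, isolates connectedness as the only global input, and dispenses with the geodesic and metric-space machinery entirely. The one implicit assumption, shared by your write-up and the paper's proof alike, is that the harmonic form is not identically zero, i.e. its constant length $c$ is positive; both contradictions rest on this.
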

\begin{proof}
We fix a point $p$. 
Let $r_{i}^{+}(p)$ be eigenvalues of $\frac{s}{3}-2W_{+}|_{p}$ and $r_{i}^{-}(p)$ be eigenvalues of $\frac{s}{3}-2W_{-}|_{p}$.
We assume $r_{3}^{+}(p)\leq r_{2}^{+}(p)\leq r_{1}^{+}(p)$ and  $r_{3}^{-}(p)\leq r_{2}^{-}(p)\leq r_{1}^{-}(p)$.
We regard $r_{i}^{\pm}$ as continuous functions on $M$.
Then Lemma 2 tells us that $0<r_{i}^{+}+r_{j}^{-}$ at any point on $M$. 
We claim at least one of $r_{i}^{\pm}$ be nonpositive at any point on $M$.  If all $r_{i}^{\pm}$ are positive at $q$, then $\frac{s}{3}-2W$ is a positive operator at $q$.
Let $\omega$ be a harmonic 2-form of constant length.
Then at a point $q$, we have 
\[0=<\Delta\omega, \omega>=\frac{1}{2}\Delta|\omega|^{2}+|\nabla\omega|^{2}-2W(\omega, \omega)+\frac{s}{3}|\omega|^{2}.\]
Since $\frac{s}{3}-2W$ is positive at $p$ and $|\omega|^{2}$ is constant, we get $\omega=0$ at $p$, which is a contradiction.

Thus, one of $r_{3}^{+}(p)$ or $r_{3}^{-}(p)$ is nonpositive. Let us assume $r_{3}^{+}(p)\leq 0$ and then we have $r_{i}^{-}(p)>0$ for $i=1, 2, 3$. 
We claim $\frac{s}{3}-2W_{-}$ is positive on $M$. 
We consider the following closed subset of $M$. 
\[A_{i}=\{x\in M|r^{-}_{i}(x)=0\}.\]
On a riemannian manifold $(M, g)$ , distance $d_{g}$ between two points is defined and using this, metric topology is given and this topology is the same with 
the  topology on $M$ given by the differentiable structure. 
Since $A_{i}$ is compact and $d_{g}(p, \cdot)$ is a continuous function on $A_{i}$, there exists $c_{i}\in A_{i}$ such that $d_{g}(p, c_{i})$ takes the minimum $a_{i}$. 

If $\frac{s}{3}-2W_{-}$ is not positive, at least one of $A_{i}$ is not empty. 
We denote $a_{m}$  be the $min\{a_{i}\}$, where $A_{i}$ is non-empty. 
Note that at $c_{m}$, we have $r_{m}^{-}(c_{m})=0$ Thus, we have $r_{i}^{+}(c_{m})>0$. 
Since $r_{i}^{+}$ are continuous, there exists a neighborhood $U$ of $c_{m}$ such that $r_{i}^{+}(U)>0$. 
Note that by shrinking $U$ if necessary, we can assume $p\in U^{c}$.
Then we take a ball $B(c_{m}, \epsilon)=\{x|d_{g}(x, c_{m})<\epsilon\}$ such that $\overline{B(c_{m}, \epsilon)}\subset U$. 
Since $M$ is compact, $(M, d_{g})$ is complete. 
Thus, there exists a curve $\gamma$ joining $c_{m}$ and $p$ such that the length of $\gamma$ achieves $d_{g}(p, c_{m})$. 
Note that $\gamma$ intersects with the boundary of $\overline{B(c_{m}, \epsilon)}$. 
Let $d_{m}$ be the intersection point. 
Since $d_{g}(p, d_{m})<a_{m}$, we have $r_{i}^{-}(d_{m})>0$ for $i=1, 2, 3$. For otherwise, that is, if $r_{i}^{-}(d_{m})\leq 0$, then by intermediate value theorem, 
there is a point $q\in \gamma |_{[p, d_{m}]}$ such that $r_{i}(q)=0$, which contradicts the definition of $c_{m}$. 
Thus, at $d_{m}$, we have $r_{i}^{\pm}(d_{m})>0$, which gives a contradiction under the existence of a harmonic 2-form of constant length. 
Thus, $\frac{s}{3}-2W_{-}$ is positive on $M$. 
If we assume $r^{-}_{3}(p)\leq 0$ instead, then we get $\frac{s}{3}-2W_{+}$ is positive on $M$. 
\end{proof}

\newtheorem{Proposition}{Proposition}
\begin{Proposition}
Let $(M^{4}, g)$ be a compact, oriented riemannian manifold with $K_{3}^{\perp}<\frac{s}{4}$. 
If $(M, g)$ admits a harmonic 2-form $\omega$ of constant length, then this form is either self-dual or anti-self-dual.
Moreover, the intersection form of $M$ is definite. 
\end{Proposition}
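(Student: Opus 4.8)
The plan is to invoke the previous Lemma, which guarantees that one of the operators $\frac{s}{3}-2W_{+}$ or $\frac{s}{3}-2W_{-}$ is positive on all of $M$. Suppose $\frac{s}{3}-2W_{-}$ is positive; the remaining case is identical with the roles of self-dual and anti-self-dual interchanged. I would decompose the given harmonic form into its self-dual and anti-self-dual parts,
\[\omega=\omega_{+}+\omega_{-},\qquad \omega_{\pm}=\tfrac{1}{2}(\omega\pm *\omega),\]
and first record the structural fact that each of $\omega_{+}$ and $\omega_{-}$ is again harmonic: on a compact oriented $4$-manifold the Hodge star commutes with the Hodge Laplacian, so it preserves the space of harmonic $2$-forms and hence the eigenspaces $\Lambda^{\pm}$ of $*$.

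Next I would apply the Weitzenb\"ock identity used in the previous Lemma to the harmonic form $\omega_{-}$ alone. Because the Weitzenb\"ock curvature operator is block-diagonal with respect to $\Lambda^{2}=\Lambda^{+}\oplus\Lambda^{-}$, the curvature term acts on $\omega_{-}$ through $W_{-}$, giving the pointwise identity
\[0=\tfrac{1}{2}\Delta|\omega_{-}|^{2}+|\nabla\omega_{-}|^{2}+\langle(\tfrac{s}{3}-2W_{-})\omega_{-},\omega_{-}\rangle.\]
Integrating over the compact manifold $M$ kills the $\Delta|\omega_{-}|^{2}$ term by the divergence theorem, leaving
\[0=\int_{M}|\nabla\omega_{-}|^{2}+\int_{M}\langle(\tfrac{s}{3}-2W_{-})\omega_{-},\omega_{-}\rangle.\]
Both integrands are nonnegative, the second because $\frac{s}{3}-2W_{-}$ is a positive operator, so each vanishes identically. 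In particular $\langle(\frac{s}{3}-2W_{-})\omega_{-},\omega_{-}\rangle\equiv 0$ forces $\omega_{-}\equiv 0$, so $\omega=\omega_{+}$ is self-dual; this proves the first assertion, and in the symmetric case $\omega$ is anti-self-dual.

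For the statement about the intersection form I would run the very same Bochner computation, but now applied to an \emph{arbitrary} anti-self-dual harmonic $2$-form. The positivity of $\frac{s}{3}-2W_{-}$ forces every such form to vanish, so the space of anti-self-dual harmonic $2$-forms is trivial. By Hodge theory this space is isomorphic to the maximal subspace of $H^{2}(M;\mathbb{R})$ on which the intersection form is negative definite; hence that form has no negative part and is therefore definite. (In the other case $\frac{s}{3}-2W_{+}>0$ kills all self-dual harmonic forms and the intersection form is negative definite.)

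I expect the only delicate points to be the two inputs feeding the Bochner argument: that $\omega_{+}$ and $\omega_{-}$ are separately harmonic, and that the Weitzenb\"ock curvature operator respects the splitting $\Lambda^{+}\oplus\Lambda^{-}$ so that $W(\omega_{-})=W_{-}(\omega_{-})$. Both are standard on a compact oriented $4$-manifold, and they are precisely what reduces the proposition to the scalar positivity already established in the previous Lemma. Once they are in hand, the rest is the routine integration of a nonnegative pointwise identity.
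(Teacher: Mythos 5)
Your proposal is correct and follows essentially the same route as the paper: invoke the preceding lemma to get pointwise positivity of $\frac{s}{3}-2W_{-}$ (or $\frac{s}{3}-2W_{+}$), note that the self-dual and anti-self-dual parts of $\omega$ are separately harmonic, and run the Weitzenb\"ock--Bochner integration on the anti-self-dual part and then on an arbitrary anti-self-dual harmonic form to kill $b_{-}$. Your write-up is in fact slightly cleaner than the paper's, which states the final contradiction in a somewhat garbled inequality and leaves the Hodge-theoretic identification of anti-self-dual harmonic forms with the negative part of the intersection form implicit.
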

\begin{proof}
For a 2-form $\alpha$ on a 4-manifold, its self-dual part is given by 
$\alpha^{+}=\frac{\alpha+*\alpha}{2}$ and its anti-self-dual part is given by $\alpha^{-}=\frac{\alpha-*\alpha}{2}$. 
Since the Hodge star operator sends harmonic forms to harmonic forms, we get $\alpha^{\pm}$ are harmonic if $\alpha$ is a harmonic 2-form. 

By Lemma 4, either $\frac{s}{3}-2W_{+}$ or $ \frac{s}{3}-2W_{-}$ is positive.
Suppose $ \frac{s}{3}-2W_{-}$ is positive.
From the Weitzenb\"ock formula about an anti-self-dual 2-form $\omega_{-}$, which is harmonic, we have 
\[0=\Delta\omega_{-}=\nabla^{*}\nabla\omega_{-}-2W_{-}(\omega_{-}, \cdot)+\frac{s}{3}\omega_{-}.\]
If we take an inner product with $\omega_{-}$, we get
\[0=\frac{1}{2}\Delta|\omega_{-}|^{2}+|\nabla\omega_{-}|^{2}-2W(\omega_{-}, \omega_{-}) +\frac{s}{3}|\omega_{-}|^{2}.\]
Since $\frac{s}{3}-2W_{-}$ is a positive operator, by taking an integration 
we get
\[\int_{M} \frac{1}{2}\Delta|\omega_{-}|^{2}+|\nabla\omega_{-}|^{2}=\int |\nabla\omega_{-}|^{2}<0,\]
which is a contradiction. Thus, we get $\omega_{-}=0$. 
The same argument shows that there are no anti-self-dual harmonic 2-forms on $M$. 
\end{proof}

\newtheorem{Remark}{Remark}
\begin{Remark}
If $(M^{4}, g)$ is a compact, oriented manifold of positive sectional curvature, 
then by Synge's theorem, $M$ is simply connected. 
Then by results of Donaldson and Freedman [5], [6], 
Seaman's result implies that
if such $M$ admits a harmonic 2-form of constant length, then it is homeomorphic to 
\[\mathbb{CP}_{2}\#\cdot\cdot\cdot \#\mathbb{CP}_{2}.\]
We show that $M$ is diffeomorphic to $\mathbb{CP}_{2}$ in the next section. 
\end{Remark}

\begin{Remark}
Note that in [17], not only $r_{i}^{+}+r_{j}^{-}>0$ but also $r_{i}^{+}+r_{j}^{+}>0$ for $i\neq j$ and $r_{i}^{-}+r_{j}^{-}>0$ for $i\neq j$ were used. 
Note that for Lemma 3, we only need  $r_{i}^{+}+r_{j}^{-}>0$. 
This is the main difference between Seaman's and ours. 
Though positive sectional curvature is assumed in [17],  we note that the method of proof in [17] still works under the hypothesis
of $K^{\perp}>0$. Namely, $r$ in (17), [16] is positive if $K^{\perp}>0$.
 \end{Remark}

\vspace{50pt}

\section{\large\textbf{Main results}}\label{S:Intro}
Let $(M^{4}, g)$ be a compact, oriented riemannian manifold and 
suppose a self-dual harmonic 2-form $\omega$ with constant length $\sqrt{2}$ is given. 
Then we have $\omega=*\omega$ and $\Delta\omega=0$.
Since $\Delta=dd^{*}+d^{*}d$ and $d^{*}$ is the adjoint of $d$, we have 
\[\int_{M}<\Delta\omega, \omega>d\mu=\int_{M}<(dd^{*}+d^{*}d)\omega, \omega>d\mu=\int_{M}<d^{*}\omega, d^{*}\omega>+<d\omega, d\omega>d\mu.\]
Thus, if $\Delta\omega=0$, we get $d^{*}\omega=d\omega=0$. 
Also note that $\omega\wedge\omega=|\omega|^{2}d\mu_{g}$. 
Thus, $\omega$ is a symplectic form on $M$. 
Then from $\omega$ and $g$, an almost-complex structure $J$, which is compatible with $\omega$ and $g$, can be defined [11]. 
$J$ is called compatible with $g$ if $g(x, y)=g(Jx, Jy)$ and with $\omega$ if $\omega(x, y)=\omega(Jx, Jy)$ and $\omega(x, Jx)>0$ for all nonzero $x\in TM$. 
In this case, $(M, g, J, \omega)$ is called an almost-K\"ahler structure.

We introduce Liu's theorem on a symplectic 4-manifold which admits a metric of positive scalar curvature [13]. 
\begin{Theorem}
(Liu) Let $M$ be a smooth, compact, symplectic 4-manifold.
If $M$ admits a positive scalar curvature metric, then $M$ is diffeomorphic to either a rational or ruled surface. 
\end{Theorem}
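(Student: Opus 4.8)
The plan is to derive the theorem from Seiberg--Witten theory, by playing two opposing facts against each other: Taubes' nonvanishing theorem for symplectic $4$-manifolds and the vanishing of the monopole equations under positive scalar curvature. Recall that a symplectic manifold $(M,\omega)$ carries a canonical $\mathrm{Spin}^{c}$ structure whose determinant line has first Chern class $-K_{\omega}$, where $K_{\omega}$ is the symplectic canonical class, and Taubes proved that the associated Seiberg--Witten invariant is $\pm 1$; when $b_{2}^{+}(M)>1$ this value is independent of the metric and perturbation. On the other hand, the Weitzenb\"ock formula for the Dirac operator shows that for a metric of positive scalar curvature every solution $(A,\Phi)$ of the Seiberg--Witten equations has $\Phi\equiv 0$, so the moduli space consists only of reducibles and the invariant vanishes. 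First I would observe that when $b_{2}^{+}>1$ these conclusions contradict each other, since the invariant would be simultaneously $\pm 1$ and $0$; hence no symplectic $4$-manifold with $b_{2}^{+}>1$ admits a positive scalar curvature metric, and the problem reduces to the case $b_{2}^{+}=1$.

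Next I would organize the $b_{2}^{+}=1$ case through the symplectic Kodaira dimension $\kappa^{s}(M,\omega)$, defined by the signs of the pairings $K_{\omega}\cdot[\omega]$ and $K_{\omega}^{2}$ of the canonical class of the minimal model. The blow-up formula for Seiberg--Witten invariants, together with the fact that rationality and ruledness are preserved under blowing up and down, lets me pass to a minimal model without loss of generality. I would then invoke the classification that $\kappa^{s}=-\infty$ holds exactly for rational and ruled surfaces (Li). It therefore suffices to prove that a minimal symplectic $4$-manifold carrying a positive scalar curvature metric must have $\kappa^{s}=-\infty$; equivalently, that the inequalities $K_{\omega}^{2}\ge 0$ and $K_{\omega}\cdot[\omega]\ge 0$, which hold for every minimal symplectic $4$-manifold that is neither rational nor ruled, are incompatible with positive scalar curvature.

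To rule out $\kappa^{s}\ge 0$ I would use the curvature estimates that Seiberg--Witten theory imposes on scalar curvature. The canonical class is a monopole class, and for such a class the Weitzenb\"ock identity, combined with integration against the spinor, yields a LeBrun-type lower bound on $\int_{M}s^{2}$ (equivalently a nonpositive upper bound on the Yamabe invariant) governed by $K_{\omega}^{2}$; under the inequalities of the $\kappa^{s}\ge 0$ case this bound forces the scalar curvature to be negative somewhere, contradicting the hypothesis. The main obstacle is precisely this step in the $b_{2}^{+}=1$ regime: because rational and ruled surfaces also have nonvanishing canonical Seiberg--Witten invariant, the obstruction to positive scalar curvature cannot come from mere nonvanishing but must be extracted from the finer numerical condition $K_{\omega}^{2}\ge 0$, and establishing the requisite curvature estimate requires careful tracking of which chamber the symplectic form and the given metric occupy, together with the wall-crossing formula relating them. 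Once this incompatibility is secured, Li's classification identifies $M$ as rational or ruled.
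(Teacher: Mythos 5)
The paper itself does not prove this statement; it is quoted verbatim as Liu's theorem and cited to [13], so your outline can only be measured against Liu's actual argument. Measured that way, it contains a genuine gap, and it sits exactly where you yourself locate ``the main obstacle.'' Your first step is sound: for $b_{2}^{+}>1$, Taubes' nonvanishing of the Seiberg--Witten invariant of the canonical $\mathrm{Spin}^{c}$ structure contradicts the vanishing forced by positive scalar curvature, so $b_{2}^{+}=1$. The reduction to a minimal model via the blow-up formula and the appeal to Li's classification of $\kappa^{s}=-\infty$ are also legitimate. But the entire content of the theorem then rests on showing that a minimal symplectic $4$-manifold with $b_{2}^{+}=1$, $K^{2}\geq 0$ and $K\cdot[\omega]\geq 0$ admits no positive scalar curvature metric, and for this you propose a ``LeBrun-type lower bound on $\int_{M}s^{2}$ governed by $K^{2}$'' which you never establish --- and which cannot follow from Taubes' nonvanishing alone. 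Indeed, rational and ruled surfaces also have invariant $\pm 1$ in the Taubes chamber, yet $\mathbb{CP}_{2}$ carries the Fubini-Study metric; so in the $b_{2}^{+}=1$ regime ``monopole class'' is chamber-dependent, and the usual derivation of such curvature estimates requires a solution of the equations for the given metric with small perturbation, which one does not have until one knows that the PSC metric's chamber coincides with the Taubes chamber. You flag this issue but leave it unresolved, so the proof is incomplete at its decisive point.

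The missing step is not a curvature estimate but light-cone geometry, and this is how Liu closes the argument. If $K^{2}\geq 0$ and $K\cdot[\omega]>0$, then $K$ lies in the closed forward cone of the Lorentzian intersection form on $H^{2}(M;\mathbb{R})$, time-oriented by $[\omega]$. Hence for any metric $g$, the self-dual harmonic form $\omega_{g}$, normalized so that $\omega_{g}\cdot[\omega]>0$, satisfies $K\cdot\omega_{g}\geq 0$, because two vectors in the same closed forward cone have nonnegative inner product. Therefore $c_{1}\cdot\omega_{g}=-K\cdot\omega_{g}\leq 0$, which says precisely that the pair $(g,\,\text{small perturbation})$ lies in the same chamber as Taubes' large perturbation $-ir\omega$. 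In that single chamber the invariant is $\pm 1$ by Taubes, while positive scalar curvature excludes irreducible solutions for small generic perturbations, forcing the invariant there to vanish: $\pm 1=0$, a contradiction. (The degenerate case $K\cdot\omega_{g}=0$, e.g.\ $K$ torsion as for the Enriques surface, is handled by choosing the small perturbation in the direction of $-\omega$, which places the metric in the Taubes chamber by hand.) Without this chamber-coincidence argument, or an equivalent use of the wall-crossing formula, your outline does not yet prove the theorem.
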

A rational or ruled complex surface is either $\mathbb{CP}_{2}$ or ruled. 
For more explanation in detail, we refer [11].

\vspace{10pt}
We begin proof of Theorem 4. 
\begin{proof}
Suppose $(M, g)$ is orientable and $K^{\perp}<\frac{s}{4}$.
Then, there is a choice of an orientation on $M$ such that $(M, g)$ has a self-dual 2-form $\omega$ of constant length.
By multiplying constant on $\omega$, we can assume the length of $\omega$ is $\sqrt{2}$. 
Then $(M, g, \omega)$ defines an almost-K\"ahler structure. 
If $(M, g)$ is orientable and $K^{\perp}>0$, we have $K^{\perp}<\frac{s}{4}$ with any orientation. 
Thus, in this case, we can also assume $M$ is oriented in such way that $(M, g, \omega)$ defines an almost-K\"ahler structure. 
Since the scalar curvature is positive by Corollary 2, we get  then $M$ is diffeomorphic to a rational or ruled surface by applying Liu's theorem. 
Among these, only $\mathbb{CP}_{2}$ has definite intersection form. 

If $M$ is not orientable, we get the oriented double cover $\tilde{M}$ is diffeomorphic to $\mathbb{CP}_{2}$.
On the other hand, this double cover would have an orientation-reversing diffeomorphism, and therefore $\tau(\tilde{M})=0$,
which is a contradiction. 
\end{proof}

We prove Theorem 5. 
\begin{proof}
Note that the Fubini-Study metric is the unique $U(2)$-metric on $\mathbb{CP}_{2}$ with total volume $\frac{\pi^{2}}{2}$. 
This metric is Einstein and has sectional curvature $K\in [1, 4]$ [8]. 
Let us consider the space of conformal classes modulo diffeomorphism near the Funibi-Study metric $g_{0}$.
When the dimension of a manifold is greater than 2, the dimension of this space is infinite dimensional. 

Suppose $g$ be a smooth metric on $\mathbb{CP}_{2}$ which is close to $g_{0}$ in $C^{\infty}(g_{0})$-topology. 
We consider the equation $\Delta_{g}\omega_{g}=0$. 
Since $b_{+}(\mathbb{CP}_{2})=1$, the solution exists uniquely by Hodge Theory. 
Then a self-dual harmonic 2-form $\omega_{g}$ is nondegenerate 
since this form varies continuously on the space of $C^{1, \alpha}$-metrics [3]. 

It can be shown that there is a unique almost-K\"ahler metric in the conformal class $[g]$ as explained in [11]. 
We recall the proof. When metric changes from $g$ to $u^{2}g=g'$,  we have $|\omega_{g}|_{g'}=u^{-2}|\omega_{g}|_{g}$. 
Thus, if we take $u^{2}=\frac{1}{\sqrt{2}}|\omega_{g}|_{g}$, then we get $|\omega_{g}|_{g'}=\sqrt{2}$.  
Since the Hodge-star operator is a conformally invariant operator, $\omega_{g}$ is a self-dual harmonic 2-form with respect to $g'$. 
Thus, $g'$ is conformal to $g$ and $g'$ is an almost-K\"ahler metric. 

Since $\Delta_{g}\omega_{g}=0$, by elliptic estimate, we have 
\[\|\omega_{g}\|_{C^{2, \alpha}(g_{0})}\leq C\|\omega_{g}\|_{C^{0}(g_{0})}.\] 
Thus, if $g$ is $C^{2, \alpha}({g_{0}})$, then $\omega_{g}$ is also a $C^{2, \alpha}({g_{0}})$-form. 

We claim $C^{2}(g_{0})$-norm of $|\omega_{g}|_{g}$ is small. 
For this, we only consider $\|\omega_{g}-\omega_{g_{0}}\|_{C^{2, \alpha}(g_{0})}$. 
If $g$ is close to $g_{0}$ in $C^{\infty}$-topology, then $\|g-g_{0}\|_{C^{2,\alpha}(g_{0})}$ is small.
Note that we have
\[\Delta_{g}(\omega_{g}-\omega_{g_{0}})=-(\Delta_{g}-\Delta_{g_{0}})(\omega_{g_{0}}).\]
If $\|g-g_{0}\|_{C^{2,\alpha}(g_{0})}$ is small, then $C^{\alpha}$-coefficient of $(\Delta_{g}-\Delta_{g_{0}})$ is small. 
Thus, we get $\|\Delta_{g}(\omega_{g}-\omega_{g_{0}})\|_{C^{0, \alpha}(g_{0})}$ is small. 
Then by elliptic estimate, we have 
\[\|\omega_{g}-\omega_{g_{0}}\|_{C^{2, \alpha}(g_{0})}\leq C(\|\Delta_{g}\omega_{g}-\Delta_{g}\omega_{g_{0}}\|_{C^{0, \alpha}(g_{0})}+\|\omega_{g}-\omega_{g_{0}}\|_{C^{0}(g_{0})}).\]
Thus, $\|\omega_{g}-\omega_{g_{0}}\|_{C^{2, \alpha}(g_{0})}$ is small. 

Since the Levi-Civita connection is a metric connection,
we get $\||\omega_{g}|^{2}_{g_{0}}-|\omega_{g_{0}}|^{2}_{g_{0}}\|_{C^{2}(g_{0})}$ is small.
Since $\|g-g_{0}\|_{C^{2,\alpha}(g_{0})}$ is small, $\||\omega_{g}|^{2}_{g}-|\omega_{g}|^{2}_{g_{0}}\|_{C^{2}(g_{0})}$ is small. 
 Then we get  $\||\omega_{g}|_{g}-|\omega_{g_{0}}|_{g_{0}}\|_{C^{2}(g_{0})}$ is small since  $\||\omega_{g}|_{g}+|\omega_{g_{0}}|_{g_{0}}\|_{C^{2}(g_{0})}$ is bounded.

Note that $|\omega_{g_{0}}|_{g_{0}}$ is constant. 
Thus, we get the point-wise norm of $|\omega_{g}|_{g}$ is close to $\sqrt{2}$ and the point-wise norms of the first and the second derivatives of $|\omega_{g}|_{g}$ are small.

Since $g$ is close to $g_{0}$ in $C^{2}(g_{0})$-topology and the condition of positivity of sectional curvature is open in $C^{2}(g_{0})$-topology, 
we get the sectional curvature of $g$ is positive. 
Since $C^{2}(g_{0})$-norm of $u^{2}-1$ is small, we get $g'$ and $g$ are also close in $C^{2}(g_{0})$-topology. 
Thus, the almost-K\"ahler metric $g'$ has positive sectional curvature. 
\end{proof}

Finally, we consider the following result.
\begin{Theorem}
(Gursky-LeBrun [8])
If $M$ is a smooth, compact, oriented 4-manifold with strictly positive intersection form
and $g$ is an Einstein metric on $M$ with non-negative sectional curvature, then there is a diffeomorphism $\Phi:M\to \mathbb{CP}_{2}$
such that $g=\Phi^{*}cg_{FS}$, where $g_{FS}$ is the Fubini-Study metric on $\mathbb{CP}_{2}$ and $c$ is some positive constant. 
\end{Theorem}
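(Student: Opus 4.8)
The plan is to show that the hypotheses force $(M,g)$ to be K\"ahler--Einstein with positive scalar curvature, and then to invoke the classification of such surfaces. First I would extract the topology: a strictly positive (definite) intersection form means $b_{-}(M)=0$ and $b_{+}(M)=b_{2}(M)\geq 1$, so $\tau(M)=b_{2}(M)>0$, and the signature formula $\tau=\frac{1}{12\pi^{2}}\int_{M}(|W_{+}|^{2}-|W_{-}|^{2})\,d\mu$ forces $W_{+}\not\equiv 0$. Since $g$ is Einstein the trace-free Ricci curvature vanishes, $s$ is constant, and non-negative sectional curvature gives $s\geq 0$. I would rule out $s=0$: an Einstein metric with $s=0$ is Ricci-flat, and Ricci-flat plus non-negative sectional curvature forces the metric to be flat (for each unit $e$ the numbers $K(e,e_{j})\geq 0$ sum to the vanishing Ricci trace, so each vanishes), whence $\tau=0$, a contradiction. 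So $s>0$; positive Ricci then kills $b_{1}$ by Bochner, and $\chi(M)=2+b_{2}(M)$.

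The analytic heart is a Weitzenb\"ock/Bochner argument for the self-dual Weyl curvature. For an Einstein 4-manifold one has Derdzinski's identity, schematically
\[
\frac{1}{2}\Delta|W_{+}|^{2}=|\nabla W_{+}|^{2}-\frac{s}{2}|W_{+}|^{2}+18\det W_{+},
\]
with $W_{+}$ viewed as a trace-free symmetric endomorphism of $\Lambda^{+}$. Integrating over the closed manifold removes the Laplacian term, and I would then use the non-negative sectional curvature hypothesis to control the eigenvalues $\lambda_{1}^{+}\leq\lambda_{2}^{+}\leq\lambda_{3}^{+}$ of $W_{+}$ (and those of $W_{-}$) in terms of $s$. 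Lemma 1 is the tool here: $K^{\perp}\geq 0$ gives $\frac{s}{6}+\lambda_{1}^{+}+\lambda_{1}^{-}\geq 0$, and together with the trace-free relations $\lambda_{1}^{\pm}+\lambda_{2}^{\pm}+\lambda_{3}^{\pm}=0$ this pins the top eigenvalues and $\det W_{+}$ against $s$. Combining this pointwise control with the integrated identity and with the Gauss--Bonnet and signature formulas yields a sharp inequality among $\chi$, $\tau$ and $\int_{M}s^{2}$, saturated precisely at the Fubini--Study value.

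I would then analyse the equality case. Saturation forces $\nabla W_{+}=0$ and forces $W_{+}$ to have, at every point, the K\"ahler eigenvalue pattern $\bigl(\frac{s}{6},-\frac{s}{12},-\frac{s}{12}\bigr)$ with a parallel top eigenform $\omega$. Normalising $\omega$ produces a parallel K\"ahler form, so $J:=g^{-1}\omega$ is a parallel almost-complex structure and $(M,g,J)$ is K\"ahler--Einstein with $s>0$. Finally, a compact K\"ahler--Einstein surface with positive scalar curvature has positive first Chern class, hence is del~Pezzo; the only del~Pezzo surface whose intersection form is positive definite (equivalently $b_{2}=1$, $b_{-}=0$) is $\mathbb{CP}_{2}$. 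By the uniqueness of K\"ahler--Einstein metrics on $\mathbb{CP}_{2}$ up to scale and biholomorphism, there is a diffeomorphism $\Phi:M\to\mathbb{CP}_{2}$ and a constant $c>0$ with $g=\Phi^{*}(c\,g_{FS})$.

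The main obstacle is the middle step: producing the sharp curvature estimate and, above all, controlling its equality case. Turning ``non-negative sectional curvature'' into a usable pointwise bound on the eigenvalues of $W_{\pm}$, and showing that every inequality in the chain degenerates simultaneously exactly in the K\"ahler situation, is delicate; one must verify that no other borderline geometry (for instance a non-K\"ahler Einstein metric sharing the same integral invariants) can occur, which is where the rigidity genuinely lives.
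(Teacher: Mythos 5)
Your topological preliminaries (including the observation that Ricci-flat plus non-negative sectional curvature forces flatness, hence $\tau=0$) and your endgame (a K\"ahler--Einstein surface with $s>0$ and positive definite intersection form must be $\mathbb{CP}_{2}$, then Matsushima/Bando--Mabuchi uniqueness) are sound. But the analytic heart of your argument is not a proof --- it is a description of what a proof would have to accomplish, and the missing step is precisely where the entire content of the Gursky--LeBrun theorem lies. You assert that integrating Derdzinski's identity and ``combining with pointwise control, Gauss--Bonnet and the signature formula'' yields a sharp inequality among $\chi$, $\tau$, $\int s^{2}$ that is ``saturated precisely at the Fubini--Study value,'' and you then analyse the equality case. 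Two things are missing. First, you never state what the inequality is, nor establish the pointwise estimates feeding into it; the naive chain one gets from $K\geq 0$ (which for Einstein metrics is equivalent to $\lambda_{1}^{+}+\lambda_{1}^{-}\geq -\tfrac{s}{6}$, by Lemma 1 with $K^{\perp}=K$) together with the algebraic bound $\det W_{\pm}\leq |W_{\pm}|^{3}/(3\sqrt{6})$ comes out with the wrong constants --- one gets $|W_{-}|\leq s/\sqrt{6}$ where the Weitzenb\"ock argument would need $|W_{-}|\leq s/(2\sqrt{6})$ --- and closing that gap is exactly the technical achievement of Gursky--LeBrun, requiring conformal/Yamabe-type arguments rather than pointwise algebra plus integration. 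Second, and more fundamentally, rigidity via an equality case requires showing that the hypotheses \emph{force} equality: you need two opposing inequalities that pinch, one of them coming from $\tau>0$ via the signature formula, and your proposal never exhibits the second. As written, if your proposed inequality held strictly for the given metric, your argument would conclude nothing; you acknowledge this obstacle in your final paragraph, but acknowledging it does not fill it.

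For comparison, the proof the paper cites takes a different route: Gursky and LeBrun show that the hypotheses force $W_{-}\equiv 0$, and then invoke Hitchin's classification of compact self-dual Einstein 4-manifolds with positive scalar curvature (only $S^{4}$ and $\mathbb{CP}_{2}$ with their standard metrics occur); the positive intersection form then rules out $S^{4}$ and delivers the Fubini--Study metric. In that route the heavy rigidity lifting is delegated to Hitchin's theorem, and the new work is the vanishing of $W_{-}$; your route instead tries to prove K\"ahler--Einstein rigidity directly from a sharp Weyl estimate, which is a genuinely different (and in principle viable, in the spirit of Gursky's sharp $\int|W_{+}|^{2}$ bounds) strategy, but the estimate and its forced saturation are exactly what remain unproven.
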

In [8], it was shown that the assumptions in Theorem 7 implies $W_{-}=0$
and then Hitchin's theorem [7], [9] was applied. 
In the context of this theorem, we add an Einstein condition in our case, in fact, a weaker condition by LeBrun [10]. 
Sekigawa's theorem implies that an almost-K\"ahler Einstein metric with non-negative scalar curvature is K\"ahler-Einstein [18].
LeBrun generalized this result by using weaker conditions in the following way. 

\begin {Theorem}
(LeBrun [10])
Let $(M, g, J, \omega)$ be an almost-K\"ahler 4-manifold. 
Suppose $s\geq 0$ and $\delta W^{+}=0$. 
Then $g$ is a constant scalar curvature K\"ahler metric. 
\end{Theorem}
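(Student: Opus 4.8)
The plan is to show that the almost-complex structure is parallel, i.e. that $\nabla\omega=0$, so that $g$ is K\"ahler, and then to upgrade $\delta W^{+}=0$ to the statement that $s$ is constant. Since $\omega$ is closed and self-dual it is harmonic, so the Weitzenb\"ock formula already used in Proposition 1 applies to it:
\[
0=\nabla^{*}\nabla\omega-2W^{+}(\omega)+\tfrac{s}{3}\omega.
\]
Pairing with $\omega$, using that $|\omega|^{2}=2$ is constant, and integrating gives
\[
\int_{M}|\nabla\omega|^{2}\,d\mu=2\int_{M}W^{+}(\omega,\omega)\,d\mu-\tfrac{2}{3}\int_{M}s\,d\mu.
\]
The first point I would stress is that this identity alone is not enough: on an almost-K\"ahler $4$-manifold the $\omega$-component of $W^{+}$ is pinned pointwise to $s$ and $|\nabla\omega|^{2}$ by a structure equation, so the right-hand side collapses and no new information is produced. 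Hence the hypotheses $s\ge 0$ and $\delta W^{+}=0$ have to be fed in through a genuinely different, second-order identity.

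The source of that identity is the divergence-free condition on $W^{+}$. Viewing $W^{+}$ as a trace-free symmetric endomorphism of $\Lambda^{+}$, the second Bianchi identity together with $\delta W^{+}=0$ yields a Weitzenb\"ock equation of Bourguignon--Derdzinski type whose leading terms have the schematic form
\[
\nabla^{*}\nabla W^{+}=\tfrac{s}{2}W^{+}-6\,\overset{\circ}{(W^{+})^{2}}+(\text{lower order}),
\]
where $\overset{\circ}{(\cdot)}$ denotes the trace-free part. Taking the inner product with $W^{+}$ and integrating over the compact $M$ produces an integral relation between $\int_{M}|\nabla W^{+}|^{2}$, $\int_{M}s|W^{+}|^{2}$ and $\int_{M}\det W^{+}$. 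The almost-K\"ahler structure then enters as an algebraic constraint: because $\omega$ singles out a distinguished line in $\Lambda^{+}$, the eigenvalues of $W^{+}$ are controlled by $s$, the star-scalar curvature $s^{*}$, and the K\"ahler deficit $\tau=s^{*}-s\ge 0$, a nonnegative multiple of $|\nabla\omega|^{2}$ that vanishes exactly in the K\"ahler case. I would substitute these relations into the two displayed integral identities and combine them so as to isolate $\tau$.

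The rigidity then follows by a sign argument of Sekigawa type. After the combination, every term except one carries a definite sign, and the remaining cross-term appears weighted by $s$; invoking $s\ge 0$ forces the integrand to be nonnegative, while the identity demands that it integrate to zero, so $\tau\equiv 0$ and therefore $\nabla\omega=0$. Thus $g$ is K\"ahler. Finally, on a K\"ahler surface $W^{+}$ is algebraically determined by the scalar curvature, with eigenvalues $\tfrac{s}{6},-\tfrac{s}{12},-\tfrac{s}{12}$, so $\delta W^{+}=0$ collapses to $ds=0$ and $s$ is constant; hence $g$ is a constant-scalar-curvature K\"ahler metric. I expect the main obstacle to be the middle step: extracting the correct almost-K\"ahler eigenvalue identities for $W^{+}$ and arranging the two Weitzenb\"ock identities so that the indefinite contributions cancel and only an $s$-weighted, sign-definite multiple of $\tau$ survives, since this is exactly the point where both $\delta W^{+}=0$ and $s\ge 0$ are indispensable.
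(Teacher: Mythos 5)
A preliminary remark: the paper itself contains no proof of this statement --- Theorem 8 is quoted from LeBrun [10], with only the comment that Einstein metrics satisfy $\delta W^{+}=0$ --- so your attempt has to be judged against LeBrun's argument rather than against anything in the text. Your two bookends are correct: the Weitzenb\"ock formula for the harmonic form pins $W^{+}(\omega,\omega)=\frac{s}{3}+\frac{1}{2}|\nabla\omega|^{2}$ pointwise and by itself yields nothing new (a good observation), and once $g$ is known to be K\"ahler, $W^{+}$ has eigenvalues $(\frac{s}{6},-\frac{s}{12},-\frac{s}{12})$ with $\omega$ as top eigenvector, so $\delta W^{+}=0$ forces $ds=0$. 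The genuine gap is the middle step, and the problem is not merely that you defer it: the route you sketch would fail. Pairing the $\delta W^{+}=0$ Weitzenb\"ock formula with $W^{+}$ itself gives $\int_{M}\bigl(|\nabla W^{+}|^{2}+\frac{s}{2}|W^{+}|^{2}-18\det W^{+}\bigr)\,d\mu=0$ (your schematic signs are reversed, but no matter), and your plan for controlling the cubic term rests on the claim that the almost-K\"ahler structure controls ``the eigenvalues of $W^{+}$'' through $s$, $s^{*}$ and $\tau$. That claim is false: the structure pins only the single entry $W^{+}(\omega,\omega)$. Writing $W^{+}$ in block form relative to $\Lambda^{+}=\mathbb{R}\omega\oplus\omega^{\perp}$, with diagonal entry $f=W^{+}(\hat\omega,\hat\omega)$ where $\hat\omega=\omega/\sqrt{2}$, off-diagonal vector $b$, and $2\times 2$ block $C$ with $\mathrm{tr}\,C=-f$, the component $b$ and the trace-free part of $C$ are completely unconstrained; $\omega$ need not be an eigenvector of $W^{+}$, and $\det W^{+}$ admits no bound in terms of $s$, $s^{*}$, $\tau$. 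Hence no combination of your two self-paired identities isolates $\tau$.

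The missing idea --- and it is the crux of LeBrun's proof --- is to test the Weitzenb\"ock formula for $W^{+}$ against the tensor $\hat\omega\otimes\hat\omega$ rather than against $W^{+}$. Integrating by parts twice moves $\nabla^{*}\nabla$ onto $\hat\omega\otimes\hat\omega$, which can then be expanded using the Weitzenb\"ock formula for the harmonic form; the result is
\[
0=\int_{M}\Bigl[\,2|b|^{2}+2|C|^{2}-\tfrac{s}{6}f-2\sum_{i}C(\nabla_{i}\hat\omega,\nabla_{i}\hat\omega)\Bigr]\,d\mu,
\]
an identity in which every term involves only the constrained entry $f=\frac{s}{6}+\frac{1}{4}|\nabla\omega|^{2}$ and the blocks $b$, $C$ paired in ways that can be estimated (the cross-term because each $\nabla_{i}\hat\omega\perp\hat\omega$ and $\mathrm{tr}\,C=-f$). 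Elementary quadratic estimates bound the integrand below by $2|b|^{2}+2\bigl(f-\frac{s}{6}\bigr)\bigl(f+\frac{s}{12}\bigr)$, which is nonnegative precisely because $f\geq\frac{s}{6}$ and $s\geq 0$; vanishing of the integral then forces $f\equiv\frac{s}{6}$, i.e.\ $\nabla\omega\equiv 0$, so $g$ is K\"ahler, after which your final step applies verbatim. In short, your outline identifies the right family of ideas (Sekigawa-type integral formulas), but the decisive ingredient is the choice of test tensor, not a sign bookkeeping of the self-paired identity, and without it the argument does not close.
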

Note that if $g$ is Einstein, $\delta W^{+}=0$ holds. Thus, this theorem generalizes Sekigawa's theorem. 

\begin{Corollary}
Let $(M, g)$ be an oriented, compact riemannian 4-manifold with the signature $\tau>0$. 
Suppose $(M, g)$ admits a harmonic 2-form of constant length. 
If $g$ has $K^{\perp}>0$ or $K^{\perp}<\frac{s}{4}$ and $\delta W^{+}=0$, 
then there is a diffeomorphism $\Phi: M\to \mathbb{CP}_{2}$ such that $g=\Phi^{*}cg_{FS}$,
 where $g_{FS}$ is the Fubini-Study metric on $\mathbb{CP}_{2}$ and $c$ is some positive constant.
\end{Corollary}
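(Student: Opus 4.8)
The plan is to pass $(M,g)$ through the two structural results already assembled, Theorem 4 and LeBrun's Theorem 8, and then to identify the resulting K\"ahler metric on $\mathbb{CP}_{2}$ with the Fubini-Study metric. First I would invoke Corollary 2 to record that $s>0$ (hence $s\ge0$), and Theorem 4 --- which applies because of the curvature hypothesis $K^{\perp}>0$ or $K^{\perp}<\frac{s}{4}$ --- to fix an orientation for which $(M,g)$ carries a self-dual harmonic $2$-form $\omega$ of constant length, and therefore an almost-K\"ahler structure $(M,g,J,\omega)$, together with the diffeomorphism type: $M$ is diffeomorphic to $\mathbb{CP}_{2}$. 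Since $\tau(\mathbb{CP}_{2})=1>0$, the hypothesis $\tau>0$ guarantees that the given orientation is exactly the one produced by Theorem 4, namely the complex orientation; in particular the condition $\delta W^{+}=0$ is taken with respect to this same orientation, and this compatibility is what allows the remaining steps to proceed.

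Next, because $(M,g,J,\omega)$ is almost-K\"ahler with $s\ge0$ and $\delta W^{+}=0$, LeBrun's Theorem 8 applies and upgrades $g$ to a constant scalar curvature K\"ahler metric. In particular the almost-complex structure $J$ becomes integrable, so $(M,J)$ is a genuine complex surface on which $g$ is K\"ahler with $s>0$ constant.

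It then remains to pass from constant scalar curvature K\"ahler to Fubini-Study. I would first identify the complex structure: $(M,J)$ is diffeomorphic to $\mathbb{CP}_{2}$ and carries a K\"ahler metric of positive scalar curvature, so by the Kodaira classification it is a rational surface, and $b_{2}=1$ forces it to be biholomorphic to $\mathbb{CP}_{2}$. Since $H^{2}(\mathbb{CP}_{2};\mathbb{R})$ is one-dimensional and of type $(1,1)$, the K\"ahler class $[\omega]$ is automatically a positive multiple of $c_{1}$; a standard argument (writing $\rho-\lambda\omega=i\partial\bar\partial f$ and taking traces, so that constancy of $s$ forces $f$ to be constant) then shows that a constant scalar curvature K\"ahler metric whose K\"ahler class is proportional to $c_{1}$ is Einstein. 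Finally, the K\"ahler-Einstein metric on $\mathbb{CP}_{2}$ is unique up to scaling and biholomorphism and coincides with the Fubini-Study metric, which yields the desired diffeomorphism $\Phi\colon M\to\mathbb{CP}_{2}$ and constant $c>0$ with $g=\Phi^{*}cg_{FS}$.

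I expect the main obstacle to lie in this last paragraph rather than in the two quotations that precede it: concluding Fubini-Study requires correctly interpreting ``$\mathbb{CP}_{2}$'' as a complex surface (not merely a smooth manifold), and then combining the passage from constant scalar curvature K\"ahler to K\"ahler-Einstein with the uniqueness of the K\"ahler-Einstein metric. An alternative ending, closer to the Gursky-LeBrun template cited above, would be to establish $W_{-}\equiv0$ and invoke Hitchin's theorem; but under the present hypotheses the biorthogonal bound only yields that $\frac{s}{3}-2W_{-}$ is positive (equivalently $\lambda_{3}^{-}<\frac{s}{6}$) rather than $W_{-}=0$ pointwise, so the route through uniqueness of the K\"ahler-Einstein metric appears more robust.
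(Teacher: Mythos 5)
Your proposal is correct and follows the same overall skeleton as the paper's proof: Theorem 4 gives the almost-K\"ahler structure, the positive scalar curvature, and the diffeomorphism to $\mathbb{CP}_{2}$; Theorem 8 upgrades $g$ to a constant scalar curvature K\"ahler metric; and the problem is then reduced to showing that such a metric on $\mathbb{CP}_{2}$ is K\"ahler--Einstein, hence Fubini--Study up to biholomorphism and rescaling. Where you differ is in the two supporting steps. First, to identify the complex surface $(M,J)$ with $\mathbb{CP}_{2}$, the paper quotes Yau's theorem (a compact complex surface homotopy equivalent to $\mathbb{CP}_{2}$ is biholomorphic to it), whereas you argue through the Kodaira classification: K\"ahler with $s>0$ forces Kodaira dimension $-\infty$, hence rational, and $b_{2}=1$ then pins down $\mathbb{CP}_{2}$. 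Second, for the implication that a constant scalar curvature K\"ahler metric on $\mathbb{CP}_{2}$ is Einstein, the paper notes that $\rho^{+}=\frac{s}{4}\omega$ is closed when $s$ is constant, so the Ricci form $\rho$ is harmonic, and $b_{+}=1$ forces $\rho$ to be a constant multiple of $\omega$; you instead use $[\omega]$ proportional to $c_{1}$, the $i\partial\bar\partial$-lemma, and a trace plus maximum-principle argument. Both pairs of arguments are standard and correct, and neither buys much over the other, though the paper's harmonic-form argument is arguably more in the spirit of its $b_{+}=1$ theme. Two further remarks: (i) your observation that $\tau>0$ forces the orientation fixed in the hypothesis (the one relative to which $\delta W^{+}=0$ is assumed) to coincide with the almost-K\"ahler orientation produced by Theorem 4 is a genuine point of care that the paper passes over silently --- without it, $\delta W^{+}=0$ could a priori refer to the wrong half of the Weyl tensor; (ii) at the final step, where you simply assert uniqueness of the K\"ahler--Einstein metric on $\mathbb{CP}_{2}$, the paper invokes Matsushima's theorem and, in a subsequent remark, Bando--Mabuchi, which is the cleaner reference for uniqueness modulo automorphisms; citing one of these would complete your argument.
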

\begin{proof}
By Theorem 4, $(M,g)$ defines an almost-K\"ahler structure with $s>0$  and $\delta W^{+}=0$. 
Then, $g$ is a constant scalar curvature K\"ahler metric by Theorem 8. 
Moreover, $M$ is diffeomorphic to $\mathbb{C}P^{2}$. 
By Yau's theorem, if a compact complex surface is homotopy equivalent to $\mathbb{C}P^{2}$, 
then it is biholomorphic to $\mathbb{CP}_{2}$ [19].  
Thus, $M$ is biholomorphic to $\mathbb{CP}_{2}$, which we denote by $\Psi: \mathbb{CP}_{2}\to M$. 
We claim that a constant scalar curvature K\"ahler metric on $\mathbb{CP}_{2}$ is K\"ahler-Einstein. 
First, note that $b_{+}(\mathbb{CP}_{2})=1$ and therefore the K\"ahler form $\omega$ is the unique representative harmonic 2-form up to rescaling. 
On the other hand, the Ricci form $\rho$ is a closed $(1, 1)$-form. 
The self-dual part of $\rho$ is given by $\rho^{+}=\frac{\rho+*\rho}{2}$. 
Since $d\rho=0$, we get $d\rho^{+}=0$ if and only if $d*\rho=0$. 
In other words, $\rho$ is a harmonic 2-form if $d\rho^{+}=0$. 
Note that $\rho^{+}$ is given by $<\rho, \omega>\omega=\frac{s}{4}\omega$. 
Since the scalar curvature $s$ is constant, we get $\rho$ is a harmonic 2-form. 
Thus, on $\mathbb{CP}_{2}$, $\rho$ is equal to $\omega$ up to constants and $\omega$ is a K\"ahler-Einstein metric. 
Then by Matsushima theorem [14], we get a K\"ahler-Einstein metric on $\mathbb{CP}_{2}$ is the 
Fubini-Study metric up to automorphisms of $\mathbb{CP}_{2}$ and rescaling. 
Thus, we get that $g$ on $M$ is given by $\Phi^{*}cg_{FS}$ for some biholomorphism $\Phi: M \to \mathbb{CP}_{2}$ and for some positive constant $c$. 
\end{proof}

\begin{Remark}
Note that we used the fact that a constant scalar curvature K\"ahler metric on $\mathbb{CP}_{2}$ is K\"ahler-Einstein. 
However, Lichnerowitz proved that an analogue of Matsushima's theorem holds for a constant scalar curvature K\"ahler metric [12]. 
Thus, more carefully we may apply Lichnerowitz theorem since $b_{1}=0$ on $\mathbb{CP}_{2}$. 
\end{Remark}

\begin{Remark}
In Corollary 3, $\Psi^{*}g$ is a K\"ahler-Einstein metric on $\mathbb{CP}_{2}$. 
Thus, instead of Matsushima theorem, we may apply Bando-Mabuchi theorem, which implies 
that $\Psi^{*}g$ is the Fubini-Study metric on $\mathbb{CP}_{2}$ up to automorphisms [1]. 
\end{Remark}

\begin{Corollary}
Let $(M^{4}, g)$ be a compact 4-manifold with a harmonic 2-form of constant length. 
If $g$ is an Einstein metric with $K^{\perp}>0$ or $K^{\perp}<\frac{s}{4}$, then $M$ is diffeomorphic to $\mathbb{C}P^{2}$ 
and there is a diffeomorphism $\Phi:M\to \mathbb{CP}_{2}$
such that $g=\Phi^{*}cg_{FS}$, where $g_{FS}$ is the Fubini-Study metric on $\mathbb{CP}_{2}$ and $c$ is some positive constant.
\end{Corollary}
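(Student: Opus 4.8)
The plan is to deduce this from Theorem 4 together with the preceding corollary (the one assuming $\tau>0$ and $\delta W^{+}=0$), exploiting the fact that an Einstein metric automatically satisfies the divergence condition. First I would invoke Theorem 4: since $g$ admits a harmonic $2$-form of constant length and has $K^{\perp}>0$ or $K^{\perp}<\frac{s}{4}$, the manifold $M$ is orientable and diffeomorphic to $\mathbb{CP}_{2}$. In particular, I would fix the orientation for which the harmonic form is self-dual, so that $(M,g,\omega)$ is almost-K\"ahler and the intersection form is positive definite as in Proposition 1. For this orientation $b_{+}=1$ and $b_{-}=0$, hence the signature satisfies $\tau(M)=b_{+}-b_{-}=1>0$. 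This already establishes the diffeomorphism-type part of the conclusion.

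Next I would record the standard fact that an Einstein metric has $\delta W=0$: the contracted second Bianchi identity expresses $\delta W$ in terms of the covariant derivative of the trace-free Ricci tensor, which vanishes when $\mathrm{Ric}=\lambda g$, and splitting into self-dual and anti-self-dual parts yields $\delta W^{+}=0$ (and $\delta W^{-}=0$). This is precisely the hypothesis noted immediately after Theorem 8. Since both the Einstein condition and the vanishing of $\delta W^{\pm}$ are insensitive to the choice of orientation, they persist under the orientation fixed in the previous paragraph, so I may use $\delta W^{+}=0$ together with $\tau>0$ simultaneously.

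With $\tau>0$, $\delta W^{+}=0$, the curvature hypothesis $K^{\perp}>0$ or $K^{\perp}<\frac{s}{4}$, and the existence of a harmonic $2$-form of constant length all in hand, the preceding corollary applies verbatim and produces a diffeomorphism $\Phi:M\to\mathbb{CP}_{2}$ with $g=\Phi^{*}c\,g_{FS}$ for some $c>0$. The only content beyond quoting earlier results is the bookkeeping in the first paragraph — verifying that a single orientation makes the harmonic form self-dual while giving $\tau>0$ — and the elementary Bianchi computation producing $\delta W^{+}=0$; I do not anticipate a genuine obstacle, since both Theorem 4 and the Einstein--Bianchi identity are already available.
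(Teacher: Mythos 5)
Your proposal is correct and takes essentially the same route as the paper's proof: invoke Theorem 4 to orient $M$ as an almost-K\"ahler manifold diffeomorphic to $\mathbb{CP}_{2}$, observe that the Einstein condition forces $\delta W^{+}=0$, and then apply Corollary 3. Your explicit check that $\tau>0$ (via definiteness of the intersection form and the self-dual harmonic form) fills in a hypothesis of Corollary 3 that the paper leaves implicit; the paper additionally sketches a second route through Proposition 1 and the Gursky--LeBrun theorem (Theorem 7) that avoids Liu's theorem, but your argument matches its primary one.
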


\begin{proof}
From Theorem 4, we get $(M, g)$ can be oriented so that $(M, g)$ defines an almost-K\"ahler structure. 
With this orientation, we have $\delta W_{+}=0$ since $\delta W=0$ if $g$ is Einstein. 
Thus, we can apply Corollary 3. 
Or knowing that $(M, g)$ can be oriented so that the intersection form is positive from Proposition 1, 
we get the conclusion by using Theorem 7.  
Note that in this case, we do not use Liu's theorem in order to get $M$ is diffeomorphic to $\mathbb{CP}_{2}$. 
\end{proof}

\begin{Remark}
We note that Gursky-LeBrun's theorem implies, in particular, there is no other Einstein metric with nonnegative sectional curvature on $\mathbb{CP}_{2}$
except the Fubini-Study metric up to isometry and rescaling. 
\end{Remark}

 \vspace{50pt}

\vspace{20pt}

Email address: kiysd@snu.ac.kr

\end{document}